\newcommand{\D}{\mathbb{D}}
\newcommand{\R}{\mathbb{R}}
\newcommand{\N}{\mathbb{N}}
\newcommand\interior{\operatorname{int}}
\newtheorem{theorem}{Theorem}[section]
\newtheorem{example}[theorem]{Example}
\newtheorem{lemma}[theorem]{Lemma}
\newtheorem{corollary}[theorem]{Corollary}
\theoremstyle{definition}
\newtheorem{definition}[theorem]{Definition}
\newtheorem{remark}[theorem]{Remark}
\begin{document}

\title{Genus $2$ Cantor Sets}

\author{Alastair Fletcher and Daniel Stoertz}


\begin{abstract}
We construct a geometrically self-similar Cantor set $X$ of genus $2$ in $\R^3$. This construction is the first for which the local genus is shown to be $2$ at every point of $X$. As an application, we construct, also for the first time, a uniformly quasiregular mapping $f:\R^3 \to \R^3$ for which the Julia set $J(f)$ is a genus $2$ Cantor set.
\end{abstract}

\maketitle

\section{Introduction}
\subsection{Cantor sets embedded in Euclidean space}

A Cantor set is a totally disconnected, perfect, compact metric space. This is a natural generalization of the standard ternary Cantor set $\mathcal{C}$ contained in a line. Viewed purely as metric spaces, all Cantor sets are homeomorphic to each other. The situation gets more complex, however, once Cantor sets are embedded into $\R^n$. Examples of Cantor sets that can be embedded into $\R^n$ in inequivalent ways to $\mathcal{C}$ embedded in an axis in $\R^n$ go back to Antoine \cite{An}. These constructions are called Antoine necklaces. There is an extensive literature concerning the fascinating and often counter-intuitive properties exhibited by these embeddings. We mention as just two such examples work of Blankinship \cite{Blankinship} and DeGryse and Osborne \cite{DO}.

It is well-known that all Cantor sets embedded into $\R$ or $\R^2$ are, respectively, equivalent to each other. In two dimensions, this property can be phrased by saying there is a disk system which generates the Cantor set. This means that there is a sequence $(D_i)_{i=1}^{\infty}$ of nested sets in the plane, such that each $D_i$ consists of finitely many closed topological disks, and the intersection of all the $D_i$ is precisely the Cantor set. We refer to Moise \cite{Moise} for a proof of this result.

This viewpoint generalizes to three dimensions profitably. Here, every Cantor set embedded in $\R^3$ has a defining sequence given by $(M_i)_{i=1}^{\infty}$, where each $M_i$ is a finite collection of closed handlebodies, $M_{i+1}$ is contained in the interior of $M_i$, and the infinite intersection of the $M_i$ yields the Cantor set under consideration. 

A result of Bing \cite{Bing} shows that a Cantor set $X \subset \R^3$ has a defining sequence consisting of topological balls if and only if there is an ambient homeomorphism $f:\R^3 \to \R^3$ with $f(X) = \mathcal{C}$, where we view the standard ternary Cantor set $\mathcal{C}$ as being contained in one of the coordinate axes. Cantor sets with this property (in any dimension) are called {\it tame}. Consequently, every Cantor set in dimension one and two is tame. Cantor sets which are not tame, such as Antoine's necklace, are called {\it wild}.

\subsection{The genus of a Cantor set}

To further classify Cantor sets embedded in $\R^3$, \v{Z}eljko \cite{Z1} introduced a homeomorphic invariant called the {\it genus} of a Cantor set. Informally, this non-negative integer gives the smallest genus of handlebodies that are required by any defining sequence of the Cantor set. For example, if the Cantor set is tame, there is a defining sequence consisting of topological balls, each of which has genus zero, and hence we say the Cantor set has genus zero. Of course, any defining sequence can be modified by adding extraneous handles, and so the notion of genus is meant to remove such additions.

Antoine's necklace is usually defined via a defining sequence of genus one handlebodies, and a little extra work shows that any defining sequence for Antoine's necklace must consist of handlebodies of genus at least one. Hence Antoine's necklace has genus one. In conjunction with the aforementioned result of Bing, this shows that Antoine's necklace is wild.

There appear to be very few constructions of Cantor sets in $\R^3$ which are proved to be of genus at least $2$ in the literature. \v{Z}eljko's paper \cite{Z1} does construct Cantor sets of every genus, including genus infinity, but is somewhat special in that there is one point of the Cantor set where the higher genus behavior happens, and elsewhere the Cantor set looks like an Antoine's necklace. A refinement of this construction, combined with a construction of Skora \cite{Skora}, is given in \cite{GRZ}, although it is not shown that the genus is at least two.

The only other construction that the authors' are aware of belongs to Babich \cite{Babich}. The terminology of {\it genus} was not yet available to her, but she showed that a certain type of Cantor set called {\it scrawny} could not have a defining sequence of genus one handlebodies. Consequently, her example is of genus two. We refer to \cite{Babich} for more details, but roughly speaking, a scrawny Cantor set $X$ is one for which every embedding of $S^1$ into $\R^3 \setminus X$ of small enough diameter bounds a topological disk which intersects $X$ in finitely many points.

A refinement of the idea of the genus of a Cantor set is the notion of {\it local genus}. If $x\in X$, we consider all the possible defining sequences of $X$ and minimize the genus of the handlebodies in all these defining sequences containing $x$. Clearly, for a tame Cantor set, the local genus is $0$ everywhere. Moreover, for Antoine's necklace the local genus is $1$ everywhere. In the example of \v{Z}eljko mentioned above, the local genus is $1$ everywhere except at one point. The local genus for Babich's example has not been computed.

\subsection{The main result}

The main purpose of the current paper is to give a construction of a genus $2$ Cantor set more in the spirit of Antoine's necklace.
Antoine's necklace can be realized as the attractor set of an iterated function system (IFS) generated by conformal contractions $\varphi_1,\ldots, \varphi_m$ of $\R^3$. We say that any set which can be realized as an attractor set of a conformal IFS is {\it geometrically self-similar}. Our main theorem then reads as follows.

\begin{theorem}
\label{thm:cantorset}
There exist geometrically self-similar Cantor sets $X \subset \R^3$ of genus $2$ with local genus $2$ at every point of $X$.
\end{theorem}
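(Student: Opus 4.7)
The plan is to exhibit $X$ as the attractor of an iterated function system $\{\varphi_1,\ldots,\varphi_m\}$ of conformal contractions of $\R^3$. By Liouville's theorem in dimension three the $\varphi_i$ are M\"obius, and in practice I will take each $\varphi_i$ to be a similarity (a rotation followed by a scaling and a translation). The seed will be a genus $2$ handlebody $H$, modelled for instance on a regular neighborhood of a figure--eight graph in $\R^3$, so that $H$ has two clearly distinguished handles. Inside $\interior(H)$ I place $m$ pairwise disjoint conformal images $\varphi_i(H)$ in a doubled-necklace configuration: the $\varphi_i(H)$ form two interlocked Antoine-like chains, one chain threading each of the two handles of $H$, with each small handlebody $\varphi_i(H)$ contributing one of its own handles to each chain. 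The attractor $X=\bigcap_n \bigcup_{|\mathbf{i}|=n}\varphi_\mathbf{i}(H)$ is then automatically geometrically self-similar.

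Once the $\varphi_i(H)$ are verified to be pairwise disjoint and compactly contained in $\interior(H)$, standard IFS arguments yield that $X$ is a Cantor set. By construction, $M_n := \bigcup_{|\mathbf{i}|=n}\varphi_\mathbf{i}(H)$ is a defining sequence for $X$ consisting of genus $2$ handlebodies, so the genus of $X$ is at most $2$, and in particular the local genus at every point is at most $2$.

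The heart of the theorem is the matching lower bound: the local genus equals $2$ at every $x\in X$. I would argue this by a linking obstruction. Fix $x\in X$ and suppose, for contradiction, that $x$ is contained in a handlebody $N$ of genus at most $1$ drawn from some defining sequence. Using the self-similarity of $X$ and a standard refinement of $(N_k)$ so that each component is small relative to the IFS scale, one may assume $N$ lies inside a single cell $H_\mathbf{i}:=\varphi_\mathbf{i}(H)$ and essentially meets several of the subcells $H_{\mathbf{i} j}$. The doubled-chain design is chosen so that the two independent meridians of $H_\mathbf{i}$ each link nontrivially with core curves of $X\cap H_\mathbf{i}$; a Dehn-type disc argument then forces $N$ to carry two independent non-compressible meridians, so $N$ must itself have genus at least $2$. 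An alternative route, more in the spirit of \cite{Babich}, is to first show that $X$ is \emph{scrawny} and then to localize Babich's obstruction around $x$.

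The principal obstacle I foresee is precisely this rigidity step: one must exclude \emph{every} possible defining sequence through $x$ of genus $\le 1$, not just those built from the IFS. This demands that the number $m$ of maps and the linking numbers between the two chains be chosen large enough that no part of $X$ can be captured in a torus or a ball --- in particular, one must rule out hybrid defining sequences which try to surround one of the two Antoine chains by a solid torus while collecting the other by auxiliary handlebodies. Apart from this topological input, the remainder of the argument is essentially bookkeeping inherited from the self-similar structure of $X$.
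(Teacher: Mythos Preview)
Your outline has the right overall shape---a self-similar IFS with genus-$2$ seed, defining sequence giving the upper bound, and a linking obstruction for the lower bound---but there are two genuine gaps where your plan, as written, would fail.

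First, the scrawny alternative does not apply. Antoine-type chain constructions are \emph{not} scrawny: a small loop encircling a single sublink bounds a disk meeting $X$ in a Cantor set, not a finite set. The paper remarks on this explicitly; Babich's method is therefore unavailable here, and you cannot fall back on it.

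Second, and more seriously, the configuration you describe---``two interlocked Antoine-like chains, one chain threading each of the two handles of $H$''---is essentially the na\"{\i}ve attempt, and it does \emph{not} force local genus $2$. If the two chains merely sit in the two handles and are linked to one another only through the ambient handlebody, then at the next scale one can find solid tori isolating pieces of $X$: a genus-$1$ defining sequence can sneak in. (The paper's acknowledgments allude to exactly this failed first attempt.) Your ``Dehn-type disc argument forces $N$ to carry two independent non-compressible meridians'' is the step that breaks: a solid torus $T$ containing one whole loop of sublinks has only one nontrivial class, and nothing in your configuration obstructs this.

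What the paper does instead is arrange the $m$ sub-handlebodies along a single figure-eight core curve and, crucially, impose a \emph{four-way mutual Hopf linking} of four sub-handlebodies at the self-intersection point of the figure-eight. This four-way link is the engine of the lower bound. The argument is then a propagation: if a solid torus $T$ (from any defining sequence, with $\partial T$ disjoint from all $\partial X_n$) contains one component of $X_n$, repeated use of the Hopf-link lemma forces $T$ to swallow an entire loop of components; the four-way link then forces $T$ to swallow the \emph{other} loop in the same parent as well (this is where two separate chains would fail); one then builds a figure-eight curve in $T$ homotopic to the parent's core and iterates up the levels to conclude that $T$ contains all of $X_n$. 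Hence $\operatorname{diam}(T)\ge\operatorname{diam}(\gamma_0)$, contradicting smallness. Your sketch is missing both the four-way link in the construction and this ``swallow-everything'' propagation in the proof; without them the local-genus-$2$ claim is unsupported.
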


We emphasize that this is the first construction where it is proved that the local genus is larger than $1$ everywhere. Defining sequences give an easy upper bound for the genus of a Cantor set, but finding a lower bound is usually a much harder task. Babich introduced in \cite{Babich} the method of slicing disks, which was also employed in \cite{GRZ}, to give a lower bound for the local genus. In place of this method, in the proof of Theorem \ref{thm:cantorset} we will directly show that genus one solids cannot appear in a non-trivial way in any defining sequence for $X$. As far as we are aware, this technique has not been used in higher genus constructions before.

To compare our construction with constructions in the literature, it is clear that \v{Z}eljko's example is not geometrically self-similar, but Babich's construction could be modified to obtain geometric self-similarity. However, Babich's construction is scrawny and it is not hard to show, although we will not need this, that our construction is not scrawny (in fact, if our construction were scrawny then it would immediately imply genus $2$ and would have made our lives easier). Since Antoine's necklace is not scrawny, this is one of the senses in which our construction is more in the spirit of Antoine's necklace.

We remark that our construction can be achieved as the attractor set of an IFS generated by $32$ contractions. For genus $1$ constructions with round tori, \v{Z}eljko \cite{Z2} has shown that $20$ is the minimal number. Consequently, while our number of $32$ is not sharp for geometrically self-similar genus $2$ Cantor sets, it cannot be far away. We leave it as an open question to determine the minimal number in the genus $2$ setting.

\subsection{Application to dynamics}

Our motivation for constructing these Cantor sets in $\R^3$ comes from dynamics. It is well-known that Cantor sets can arise as Julia sets of rational maps. For example, the Julia set of the quadratic map $f_c(z) = z^2+c$ is a Cantor set in the plane if and only if $c$ does not lie in the Mandelbrot set in parameter space.

The natural generalization of complex dynamics to $\R^3$ (and higher dimensions, although we restrict to dimension $3$ in this paper) is given by the iteration theory of uniformly quasiregular mappings. We postpone the definition of these mappings until the next section, but roughly speaking they are mappings with a uniform bound on the distortion of all the iterates. It follows that the quasiregular version of Montel's Theorem can be applied to such mappings and hence the definitions of the Julia set and Fatou set pass through almost word for word.

The very first paper to study uniformly quasiregular mappings by Iwaniec and Martin \cite{IM} actually constructed one where the Julia set is a tame Cantor set in $\R^3$. The conformal trap construction  by Martin \cite{Martin} also gives a class of uniformly quasiregular maps for which the Julia set is a tame Cantor set. The first construction of a uniformly quasiregular map with the Julia set being a wild Cantor set was given by the first author and Wu in \cite{FW}, where in fact the Julia set is an Antoine's necklace and so has genus $1$.

Using the construction from Theorem \ref{thm:cantorset}, we are able to generalize this construction and give, for the first time, a uniformly quasiregular map whose Julia set is a Cantor set of genus $2$.

\begin{theorem}\label{thm:uqrmap}
	Let $m$ be sufficiently large square that is also a multiple of $16$. Then there exists a Cantor set $X \subset \R^3$ of genus $2$ and a uniformly quasiregular map $f:\R^3 \to \R^3$ of degree $m$ whose Julia set $J(f)$ is $X$.
\end{theorem}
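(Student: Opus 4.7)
The plan is to extend the Fletcher--Wu construction of \cite{FW}, which produces a UQR map on $\R^3$ whose Julia set is an Antoine's necklace, from the genus $1$ setting to the genus $2$ Cantor set $X$ of Theorem \ref{thm:cantorset}. Write $\{\varphi_i\}_{i=1}^{32}$ for the conformal IFS with attractor $X$, and let $M_0$ be the outer genus $2$ handlebody of the defining sequence, so that $M_1 := \bigcup_{i=1}^{32}\varphi_i(M_0) \subset \interior(M_0)$ and $X = \bigcap_{k \geq 0} M_k$. The UQR map $f$ will be assembled in three stages: a model UQR branched covering $\psi$ on the handlebody $M_0$; the pullbacks $\psi \circ \varphi_i^{-1}$ on the first-stage pieces together with a quasiregular interpolation on the annular gap $M_0 \setminus \interior(M_1)$; and a Martin-type conformal trap extension \cite{Martin} outside $M_0$.

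The central step is to construct a UQR branched covering $\psi \colon M_0 \to \Omega$ onto a slightly enlarged genus $2$ handlebody $\Omega \supset M_0$, with $\psi(\partial M_0) = \partial \Omega$ and whose degree is arranged so that the final map $f$ has total degree $m$. The building blocks for $\psi$ are Iwaniec--Martin-style UQR power maps \cite{IM}, whose available degrees are perfect squares, and these can be assembled using the discrete symmetry group of the standard genus $2$ handlebody; the hypothesis that $m$ be a square multiple of $16$ records exactly which total degrees this assembly realizes. The ``sufficiently large'' hypothesis ensures that $\psi$ is expanding enough that the IFS together with $\psi^{-1}$ contracts onto $X$.

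Next I would set $f := \psi \circ \varphi_i^{-1}$ on each first-stage piece $\varphi_i(M_0)$; this is automatically UQR since $\varphi_i^{-1}$ is conformal. On the annular region $M_0 \setminus \interior(M_1)$, whose boundary already carries prescribed UQR boundary values (from $\psi$ on $\partial M_0$ and from $\psi \circ \varphi_i^{-1}$ on each $\partial \varphi_i(M_0)$), I would extend $f$ by quasiregular interpolation onto $\Omega \setminus \interior(M_0)$. Outside $M_0$, the conformal trap of \cite{Martin} extends $f$ to all of $\R^3$ so that $\R^3 \setminus M_0$ lies in the basin of an attracting fixed point. Uniform quasiregularity of the iterates then follows from an invariant conformal structure argument: since each $\varphi_i$ is conformal and the model $\psi$ is UQR, one obtains a measurable conformal structure on $\R^3$ (via the Beltrami equation) that is invariant under $f$, which is equivalent to $f$ being UQR. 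The identification $J(f) = X$ is then standard: points of $X$ have non-normal orbits since the IFS creates arbitrarily fine structure under iteration, while points of $\R^3 \setminus X$ escape into the conformal trap region and therefore lie in the Fatou set.

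The principal obstacle is the construction of $\psi$. In \cite{FW}, the continuous rotational symmetry of a round torus produces UQR power maps of any integer degree almost directly, whereas the genus $2$ setting has no such continuous symmetry and one must assemble $\psi$ from UQR primitives pieced together by the discrete symmetry group of the handlebody, controlling distortion across the gluing regions via Rickman-type quasiregular extension theorems. Proving that the assembled map is genuinely UQR, and not merely quasiregular, of the claimed degree will require the most care. Once $\psi$ is in place, the remaining steps---the quasiregular interpolation on the annular gap, the conformal trap extension, and the Julia set identification---are adaptations of established techniques from \cite{IM,Martin,FW}.
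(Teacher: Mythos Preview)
Your proposal contains one genuine error and one structural misconception.

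The error is the appeal to ``an invariant conformal structure argument via the Beltrami equation'' to establish uniform quasiregularity. That device is two-dimensional: it relies on the measurable Riemann mapping theorem, which has no analogue in $\R^3$. In dimension $3$ conformal structures are rigid (Liouville), and one cannot produce an $f$-invariant measurable conformal structure by abstract means. The paper, like \cite{FW}, proves uniform quasiregularity directly by tracking orbits: a point either remains in $X$ forever, where $f$ acts by the conformal similarities $\phi_j^{-1}$, or it passes through the non-conformal pieces at most twice before entering the region where $f$ agrees with Mayer's UQR power map (Theorem~\ref{thm:power}). The distortion of $f^n$ is therefore uniformly bounded for elementary reasons; no Beltrami-type argument is needed, and none is available.

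The structural misconception is your map $\psi\colon M_0\to\Omega$, a UQR self-cover of the genus $2$ handlebody assembled from discrete symmetries. Neither the paper nor \cite{FW} builds such a map, and the absence of a continuous symmetry on a genus $2$ body is not a technicality but the reason this route is not taken. Instead, the paper constructs a BLD degree-$m$ branched cover
\[
F\colon X_0\setminus\interior X_1 \longrightarrow \overline{B(0,4)}\setminus\interior X_0
\]
by composing two explicit $\Z/2$-quotients with an $(m/4)$-fold winding map around a round torus (this, together with the initial arrangement of the $X_{1,j}$, is the actual source of the ``multiple of $16$'' hypothesis), and then passes to a PL model to obtain BLD regularity. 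The shell $B(0,4)\setminus\interior X_0$ is mapped to a larger spherical shell using the Berstein--Edmonds extension Theorem~\ref{thm:BE}, and outside $B(0,4)$ the map is Mayer's UQR power map of degree $m=d^2$ (this is where the ``square'' hypothesis enters), not Martin's conformal trap. On each $X_{1,j}$ one simply sets $f=\phi_j^{-1}$. The identification $J(f)=X$ then follows from $J(f)=\partial I(f)$ (Theorem~\ref{thm:escaping}), since every point not in $X$ eventually escapes under the power map.
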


The construction in Theorem \ref{thm:uqrmap} has some interesting consequences for the dynamics of any Poincar\'e linearizer $L$ of a repelling periodic point of the uqr map $f$. The fast escaping set of $L$ must be a spider's web, and contain arbitrarily large genus $2$ surfaces. To keep the focus of this paper on topology, we will not discuss this topic further here, and refer the interested reader to \cite{FS} and the references therein.

The paper is organized as follows. In Section \ref{sec:prelim}, we recall relevant facts about Cantor sets embedded in Euclidean space, the definition of quasiregular mappings, and state results we will need for our constructions. In Section \ref{sec:cantorcon}, we explicitly construct a geometrically self-similar Cantor set $X$. The construction of the uqr map for Theorem \ref{thm:uqrmap} is contained in Section \ref{sec:uqrmap}. In Section \ref{sec:genus}, we prove that the genus of $X$ is 2.

{\bf Acknowledgments:} The authors would like to thank Jang-Mei Wu, whose insight that the first attempt at Theorem \ref{thm:cantorset} only gave a genus $1$ Cantor set put this work on the right track; Vyron Vellis for interesting conversations on the topic of this paper; and the anonymous referee for a number of comments which helped improve the readability of this paper. The results in this paper formed part of the Ph.D. dissertation of the second named author.

\section{Preliminaries}\label{sec:prelim}
We denote by $B(x,r)$ the Euclidean ball at $x \in \R^n$ of radius $r>0$ and by $S(x,r)$ the boundary of $B(x,r)$. Denote by $d(\cdot, \cdot)$ the Euclidean metric on $\R^n$.

\subsection{Cantor sets}\label{sec:cantor}
Recall that a Cantor set is any metric space homeomorphic to the usual Cantor ternary set. Two Cantor sets $E_1, E_2 \subset \R^n$ are equivalently embedded (or ambiently homeomorphic) if there exists a homeomorphism $\psi:\R^n \to \R^n$ such that $\psi(E_1) = E_2$. If the Cantor set $E\subset \R^n$ is equivalently embedded to a usual Cantor ternary set in a line, then $E$ is called \emph{tame}. A Cantor set which is not tame is called \emph{wild}. The first example of a wild Cantor set was Antoine's necklace \cite{An}. Its construction is well-known, but we recall it here for ease of future discussion.

\begin{example}\label{ex:antoine}
	Let $A_0 \subset \R^3$ be a solid torus and let $m\geq 4$ be a positive even integer. Choose mutually distinct solid tori $A_{1,1}, \ldots, A_{1,m}$ contained in the interior of $A_0$ such that $A_{1,i}$ and $A_{1,j}$ are linked if and only if $|i-j|\equiv \pm1 (\text{mod } m)$ and, when linked, they form a Hopf link. Fix homeomorphisms $\varphi_j:A_0 \to A_{1,j}$ for $j \in \{1,\ldots, m\}$, and define
		\[A_1 = \bigcup_{j=1}^m A_{1,j} = \bigcup_{j=1}^m \varphi_j(A_0). \]
	Then inductively define
		\[A_{i+1} = \bigcup_{j=1}^m \varphi_j(A_i),\]
	for $i\geq 1$. See \cite[Section 3.H]{Rolfsen} for a detailed illustration of the first few stages of this process. An Antoine's necklace is defined as
		\[A = \bigcap_{i=1}^\infty A_i. \]
	If $c_i$ is the maximum diameter of the tori in $A_i$, then we require $c_i \to 0$ as $i\to \infty$ in order to ensure that $A$ is a Cantor set. Furthermore, for $m$ sufficiently large, it is possible to construct $A$ such that it is geometrically self-similar (see \cite{Z2}).
\end{example}

Other examples of Cantor sets in $\R^n$ are typically defined in terms of a similar construction to above, using an intersection of nested unions of compact $n$-manifolds with boundary. For Cantor sets in $\R^3$, this notion is well summarized by \v{Z}eljko in \cite{Z1}.

\begin{definition}[see \cite{Z1}, Section 2]\label{def:defseq}
	A \emph{defining sequence} for a Cantor set $E\subset \R^3$ is a sequence $(M_i)$ of compact 3-manifolds with boundary such that
	\begin{enumerate}[(i)]
		\item each $M_i$ consists of disjoint polyhedral cubes with handles,
		\item $M_{i+1} \subset M_i$ for each $i$, and
		
		\item $E = \bigcap_i M_i$.
	\end{enumerate}
	We denote the set of all defining sequences for $E$ by $\mathcal{D}(E)$.
\end{definition}

Using different terminology, Armentrout proved in \cite{Ar} that every Cantor set in $\R^3$ has a defining sequence. Through defining sequences, \v{Z}eljko establishes a useful invariant for Cantor sets. Toward this, for a cube with handles $M$, denote by $g(M)$ the number of handles of $M$. For a disjoint union of cubes with handles $M = \sqcup_{\lambda \in \Lambda} M_\lambda$, define $g(M) = \sup\{g(M_\lambda) : \lambda \in \Lambda\}$.

\begin{definition}[see \cite{Z1}, p.350]\label{def:genus}
	Let $(M_i)$ be a defining sequence for the Cantor set $E \subset \R^3$. Define
		\[g(E;(M_i)) = \sup\{g(M_i) : i \geq 0 \}.\]
	Then define the \emph{genus} of the Cantor set $E$ as
		\[g(E) = \inf\{g(E;(M_i)) : (M_i) \in \mathcal{D}(E)\}. \]
	Now let $x \in E$. Denote by $M_i^x$ the component of $M_i$ containing $x$. Similar to above, define
		\[g_x(E;(M_i)) = \sup\{g(M_i^x) : i \geq 0 \}.\]
	Then define the \emph{local genus of $E$ at the point $x$} as
		\[g_x(E) = \inf\{g_x(E;(M_i)) : (M_i) \in \mathcal{D}(E)\}. \]
\end{definition}

In the same paper, \v{Z}eljko shows that Cantor sets of every genus exist. Also note that if $g(E_1) \neq g(E_2)$, then $E_1$ and $E_2$ are not ambiently homeomorphic.

\subsection{Quasiregular maps}\label{sec:qr}
A mapping $f:E \to \R^n$ defined on a domain $E \subset \R^n$ is called \emph{quasiregular} if $f$ belongs to the Sobolev space $W_{n,loc}^1(E)$ and there exists $K \in [1,\infty)$ such that
	\begin{equation} \label{eq:qr}
		|f'(x)|^n \leq KJ_f(x)
	\end{equation}
almost everywhere in $E$. Here $J_f(x)$ denotes the Jacobian determinant of $f$ at $x\in E$. Informally, a quasiregular mapping extends the behavior of holomorphic mappings in the plane, sending infinitesimal spheres to infinitesimal ellipsoids of uniformly bounded eccentricity. See Rickman's monograph \cite{Rickman} for more details on quasiregular mappings.

A mapping $f:E\to \R^n$ defined on a domain $E\subset \R^n$ is said to be of \emph{bounded length distortion} (BLD) if $f$ is sense-preserving, discrete, open and satisfies
	\[\ell(\gamma)/L \leq \ell(f\circ \gamma) \leq L\ell(\gamma) \]
for some $L \geq 1$ and all paths $\gamma$ in $E$, where $\ell(\cdot)$ denotes the length of a path. BLD mappings were introduced by Martio and V\"ais\"al\"a \cite{MV}. They form a strict subclass of quasiregular mappings.

\subsection{Uqr mappings}\label{sec:Uqr}
While the composition of two quasiregular mappings is again quasiregular, the dilatation typically increases. A quasiregular mapping $f$ is called \emph{uniformly quasiregular}, or uqr, if \eqref{eq:qr} holds uniformly in $K$ over all iterates of $f$. It is hence natural to study the dynamics of iterated uqr mappings. If $f:\R^n \to \R^n$ is uqr, then $x\in \R^n$ is in the Fatou set $F(f)$ if there is a neighborhood $ U \ni x$ such that $ (f^m|_U)_{m=1}^\infty $ forms a normal family.
The Julia set is the complement of the Fatou set, that is $J(f) = \R^n \setminus F(f)$, see \cite{IM}. The escaping set of a quasiregular mapping is
	\[I(f) = \{x \in \R^n : |f^m(x)| \to \infty \text{ as } m\to\infty \}. \]
The following result is a useful tool for determining the Julia set of certain uqr mappings.

\begin{theorem}[Lemma 5.2, \cite{FN}] \label{thm:escaping}
	Let $f:\R^n \to \R^n$ be uqr. Then $J(f) = \partial I(f)$.
\end{theorem}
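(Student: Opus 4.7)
The statement splits into two inclusions, $\partial I(f) \subset J(f)$ and $J(f) \subset \partial I(f)$, of rather different difficulty. For the first, I would use a normal-family argument to show that each Fatou component is either contained in $I(f)$ or disjoint from it. If $x \in F(f)$, pick a neighborhood $U$ on which $\{f^m|_U\}$ is normal in the quasiregular sense; a single escaping point $y \in U$ forces every locally uniform subsequential limit of $\{f^m|_U\}$ to take the value $\infty$ at $y$, and since qr maps take only finite values, this limit must be the constant $\infty$, whence $U \subset I(f)$. A symmetric argument rules out a single non-escaping point in $U$ when $U$ also contains escaping points. Thus $F(f) \cap I(f)$ and $F(f) \setminus I(f)$ are both open, any $x \in F(f)$ lies in the interior of $I(f)$ or of its complement, and consequently $\partial I(f) \subset J(f)$.

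For the reverse inclusion I must show that every $x \in J(f)$ is accumulated by both escaping and non-escaping points. The non-escaping side is the easier half: I would locate a periodic point $x_0 \in J(f)$ (which exists by a topological-degree argument, since the local degree of $f$ is at least two) and apply the quasiregular version of Montel's theorem to deduce that backward iterates $\bigcup_n f^{-n}(x_0)$ are dense in $J(f)$. Since preimages of a non-escaping point are non-escaping, this provides points of $\R^n \setminus I(f)$ arbitrarily close to $x$.

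The main obstacle is producing escaping points in every neighborhood of a Julia point. My plan hinges on the strong form of the qr Montel theorem: non-normality of $\{f^m|_U\}$ forces $\R^n \setminus \bigcup_{m \geq 0} f^m(U)$ to have zero conformal capacity, so this forward union is unbounded. I would then run a nested-ball construction to exhibit a single point $y \in U$ with full orbit escaping: choose balls $V_1 \supset V_2 \supset \cdots$ in $U$ and times $m_1 < m_2 < \cdots$ with $|f^{m_k}| > k$ on $V_k$, using the uniform distortion estimate on qr iterates to keep each $V_k$ nondegenerate. The limit $y \in \bigcap_k V_k$ then satisfies $|f^{m_k}(y)| \to \infty$. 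Promoting this subsequential escape to escape of the full orbit is precisely where uniform quasiregularity does its work, as iterates of a uqr map all share a single dilatation bound, which one exploits to rule out a returning orbit by, for instance, combining this with the openness of a suitable neighborhood of the point at infinity in the spherical compactification. The genuine difficulty of showing that an escaping \emph{orbit} exists, rather than merely iterates that occasionally leave large balls, is the delicate core of this direction.
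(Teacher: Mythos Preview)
The paper does not prove this statement: it is quoted as Lemma~5.2 of \cite{FN} and invoked as a black box in the proof of Lemma~\ref{lem:uqr2}. There is therefore nothing in the present paper to compare your sketch against; for that you would need to consult \cite{FN} directly. For what it is worth, your argument for $\partial I(f)\subset J(f)$ via the openness of $F(f)\cap I(f)$ and $F(f)\setminus I(f)$ is standard and sound, while for the reverse inclusion you correctly identify the crux---producing a genuinely escaping orbit, not merely a subsequentially unbounded one, near each Julia point---but your sketch stops short of supplying an actual mechanism for that step.
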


There exist higher-dimensional uqr counterparts to complex power mappings, constructed by Mayer \cite{Mayer}.

\begin{theorem}[Theorem 2, \cite{Mayer}] \label{thm:power}
	For every $d \in \N$ with $d>1$, there is a uqr map $g:\overline{\R^3} \to \overline{\R^3}$ of degree $d^2$, with Julia set $J(g) = S(0,1)$ and whose Fatou set consists of $B(0,1)$ and $\overline{\R^3} \setminus \overline{B(0,1)}$.
\end{theorem}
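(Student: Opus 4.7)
The plan is to construct $g$ as a three-dimensional analog of the complex power map $z\mapsto z^d$, with Zorich's quasiregular map playing the role of the complex exponential. First I would recall Zorich's construction: a quasiregular map $Z:\R^3\to\R^3\setminus\{0\}$ that is periodic with respect to a planar lattice $L\subset\R^2\times\{0\}$ (for concreteness, $L=2\Z\times 2\Z\times\{0\}$), sends $\{x_3=0\}$ onto $S(0,1)$, maps the upper half-space onto $\R^3\setminus\overline{B(0,1)}$, and maps the lower half-space onto $B(0,1)\setminus\{0\}$. A key feature is that $Z$ is locally bilipschitz off a discrete branch set and has a uniform dilatation $K(Z)$ depending only on the geometry of the fundamental prism.

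Next I would let $A:\R^3\to\R^3$ be the dilation $A(x)=dx$. Because $d$ is a positive integer, $A$ sends $L$ into $L$ and preserves all reflection symmetries used in defining $Z$, so whenever $Z(x)=Z(y)$ one also has $Z(Ax)=Z(Ay)$. This lets me define $g:\R^3\setminus\{0\}\to\R^3\setminus\{0\}$ by the semiconjugacy
\[
g\circ Z=Z\circ A,
\]
extended to the two-point compactification by $g(0)=0$ and $g(\infty)=\infty$, with continuity following from $|Z(x)|\to 0$ as $x_3\to-\infty$ and $|Z(x)|\to\infty$ as $x_3\to+\infty$. Locally $g=Z\circ A\circ Z^{-1}$, so $g$ is quasiregular with $K(g)\le K(Z)^2$. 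The crucial point is that iterating the semiconjugacy gives $g^n\circ Z=Z\circ A^n$, and since $A^n$ is a conformal dilation with $K(A^n)=1$, the same computation yields $K(g^n)\le K(Z)^2$ uniformly in $n$; hence $g$ is uniformly quasiregular.

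To identify the dynamics, the sphere $S(0,1)=Z(\{x_3=0\})$ is $g$-invariant because $A$ preserves $\{x_3=0\}$. For $y\in B(0,1)$, writing $y=Z(x)$ with $x_3<0$, the third coordinate of $A^n x$ tends to $-\infty$, so $g^n(y)\to 0$; symmetrically, $g^n(y)\to\infty$ for every $y$ in $\overline{\R^3}\setminus\overline{B(0,1)}$. Both components of $\overline{\R^3}\setminus S(0,1)$ therefore lie in the Fatou set, and Theorem \ref{thm:escaping} gives
\[
J(g)=\partial I(g)=S(0,1).
\]
For the degree, the dilation $A$ maps the fundamental beam $[-1,1]^2\times(0,\infty)$ onto $[-d,d]^2\times(0,\infty)$, which consists of $d^2$ copies of the fundamental beam; each maps under $Z$ once onto the exterior of the unit ball, so $\deg g=d^2$.

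I expect the main obstacle to be the combinatorial verification that $g$ is well-defined, namely that every identification (lattice periodicity together with the reflections across the faces of the fundamental prism) used in Zorich's construction is preserved by $A$. Once that check is in place, uniform quasiregularity, invariance of $S(0,1)$, and the identification of the Julia set all follow essentially for free from the semiconjugacy $g\circ Z=Z\circ A$.
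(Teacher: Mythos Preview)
The paper does not prove this statement at all: it is quoted verbatim as Theorem~2 of Mayer~\cite{Mayer} and used as a black box in Section~\ref{sec:juliaset}. So there is no ``paper's own proof'' to compare against.

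That said, your sketch is correct and is exactly Mayer's Latt\`es-type construction: semiconjugate a linear dilation $A(x)=dx$ by the Zorich map $Z$, so that $g=Z\circ A\circ Z^{-1}$ locally, and read off uniform quasiregularity from $g^n\circ Z=Z\circ A^n$ with $K(A^n)=1$. The degree count via the $d^2$ copies of the fundamental beam and the identification of $S(0,1)$ as the invariant sphere are also standard and correct. One small caveat: you invoke Theorem~\ref{thm:escaping}, which in this paper is stated for uqr maps $f:\R^n\to\R^n$, whereas your $g$ lives on $\overline{\R^3}=S^3$ with two fixed points $0$ and $\infty$. The conclusion $J(g)=S(0,1)$ is still correct, but it is cleaner to argue directly: you have already shown $g^n\to 0$ locally uniformly on $B(0,1)$ and $g^n\to\infty$ locally uniformly on the exterior, so both components lie in $F(g)$; conversely, any neighborhood of a point of $S(0,1)$ meets both basins, so no such neighborhood supports a normal family, giving $J(g)=S(0,1)$ without appealing to the escaping-set boundary result.
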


In particular, for any $r>0$,
\begin{equation}\label{eq:power}
	g(B(0,r)) = B(0,r^d).
\end{equation}

\subsection{Extending branched coverings}\label{sec:branched}
We require a generalization of the following result of Berstein and Edmonds \cite{BE} on extending coverings over PL cobordisms.

\begin{theorem}[Theorem 6.2, \cite{BE}] \label{thm:BE}
	Let $W$ be a connected, compact, oriented PL $3$-manifold in some $\R^n$ whose boundary $\partial W$ consists of two components $M_0$ and $M_1$ with the induced orientation. Let $W' = N \setminus (\interior B_0 \cup \interior B_1)$ be an oriented PL $3$-sphere $N$ in $\R^4$ with two disjoint polyhedral $3$-balls removed, and have the induced orientation on its boundary. Suppose that $\phi_i:M_i^2 \to \partial B_i$ is a sense-preserving oriented branched covering of degree $d \geq 3$, for each $i=0,1$. Then there exists a sense-preserving PL branched cover $\phi:W \to W'$ of degree $d$ that extends $\phi_0$ and $\phi_1$.
\end{theorem}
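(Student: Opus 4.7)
The strategy is to construct $\phi$ by a relative handle-by-handle extension of $\phi_0$ across $W$, using the fact that $W'$ has a standard product structure.

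First I would observe that $W' = N \setminus (\interior B_0 \cup \interior B_1)$ is PL-homeomorphic to $S^2 \times [0,1]$, so the target cobordism admits a standard product coordinate in which $\partial B_0$ is identified with $S^2 \times \{0\}$ and $\partial B_1$ with $S^2 \times \{1\}$. Thus the question becomes: given $\phi_0$ and $\phi_1$ on the two boundary components, construct a degree $d$ branched cover from the cobordism $W$ into the cobordism $S^2 \times [0,1]$ that restricts correctly on the boundary. Pick a PL relative handle decomposition of $W$ built on $M_0$, so that $W$ is obtained from $M_0 \times [0,1]$ by successively attaching $1$-, $2$-, and $3$-handles. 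On the collar $M_0 \times [0,1]$, I would first define $\phi$ as the product $\phi_0 \times \mathrm{id}$, taking values in a collar of $\partial B_0$ in $W'$.

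Next I would extend $\phi$ across each handle inductively, modeling the handle's image in $W'$ as a standard ball neighborhood. For a $k$-handle $D^k \times D^{3-k}$, the cover is already specified on the attaching region $S^{k-1} \times D^{3-k}$ and must be filled in over the whole handle. The $1$-handles and $3$-handles are straightforward because the attaching regions and the handles themselves are simply connected enough to admit unique (up to equivalence) extensions. The essential step is the $2$-handles: here one is given a branched cover of $S^1 \times D^1$ whose monodromy around the attaching circle is a permutation $\sigma \in S_d$, and one must fill in a branched cover of $D^2 \times D^1$. By classical Hurwitz theory, such a filling exists precisely when $\sigma$ is a product of transpositions in $S_d$, and the number and location of branch points can be chosen freely in the interior of $D^2$. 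For $d \geq 3$ the alternating group $A_d$ is generated by $3$-cycles and the full symmetric group has enough transpositions that every $\sigma$ whose cycle structure is compatible with the degree count can be so represented; if necessary, the cover on the incoming side can be modified by a local stabilization (pairing trivial sheets to adjust the monodromy) before filling. This is the step that fails for $d=2$, which is why the hypothesis $d\geq 3$ appears.

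After all handles are filled, the resulting $\phi$ is a branched cover on $W$ restricting to $\phi_0$ on $M_0$ and to some branched cover $\phi_1'$ of $\partial B_1$ on $M_1$. To finish, I would use a collar $M_1 \times [0,1] \subset W$ to interpolate between $\phi_1'$ and the prescribed $\phi_1$: two sense-preserving degree-$d$ PL branched covers of $S^2$ of the same degree, with matching branch data (again ensured for $d \geq 3$), are equivalent by an ambient PL isotopy, and such an equivalence can be realized as a branched cover of the collar via the mapping cylinder construction applied to the isotopy.

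The main obstacle in this plan is the $2$-handle extension, since this is where Hurwitz-type realization data must be reconciled, and where the degree hypothesis $d \geq 3$ is essential. The secondary difficulty is the matching step on $M_1$, which hinges on a uniqueness-up-to-equivalence theorem for branched covers of $S^2$; this is the kind of result for which Berstein and Edmonds' earlier machinery is designed and which one would invoke rather than prove from scratch.
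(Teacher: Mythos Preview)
The paper does not prove this theorem at all: it is quoted verbatim as Theorem~6.2 of Berstein--Edmonds \cite{BE} and used as a black box in Section~\ref{sec:uqrmap}. There is therefore nothing in the paper to compare your sketch against.

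That said, your outline is broadly in the spirit of the actual Berstein--Edmonds argument, which does proceed via a relative handle decomposition of $W$ and an inductive extension across handles, with the $2$-handle step carrying the essential content and the hypothesis $d\geq 3$. A few points in your sketch are loose, however. For the $2$-handle step, the correct statement is that extending a degree-$d$ cover of $S^1\times D^1$ over $D^2\times D^1$ requires the monodromy around the attaching circle to be a product of transpositions whose product is the identity in $S_d$; the point for $d\geq 3$ is that one can always insert extra simple branch points to kill any given permutation, whereas for $d=2$ the parity obstruction is genuine. Your final matching step is also more delicate than you indicate: two degree-$d$ branched covers $M_1\to S^2$ need not have the same branch data a priori, and the actual argument in \cite{BE} arranges this compatibility earlier in the construction rather than fixing it at the end by an isotopy. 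So the skeleton is right, but the load-bearing details (normalizing branch data, the precise Hurwitz realization lemma) are exactly what the Berstein--Edmonds paper supplies and what you would need to cite rather than wave at.
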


Through the work of Heinonen and Rickman \cite{HR} and Pankka, Rajala, and Wu \cite{PRW}, this theorem is known to be true for degree $d \geq 3$ branched covers $\partial W \to \partial W'$ between boundaries of connected, compact, oriented $3$-manifolds $W$ and $W'$, when $\partial W$ has $p \geq 2$ connected components and $W'$ is a PL $3$-sphere with the interiors of $p$ disjoint closed $3$-balls removed.

\section{Construction of the Cantor set}\label{sec:cantorcon}
To construct a geometrically self-similar Cantor set of genus 2, we first need a defining sequence consisting of similar solid double tori. We will also estimate the minimal number of double tori required in the inductive step of the construction to achieve geometric self-similarity. Throughout this section, we make several geometrically convenient choices that have no topological significance.

\subsection{Square tori}\label{sec:square}
Consider a solid torus in $\R^3$ whose core curve is a square. More specifically, start with a circle of radius $R>0$ in the $x_3 =0$ plane. Circumscribe a square $y$ around the circle such the sides of the square are parallel to the $x_1$- and $x_2$-axes, respectively. Now let $Y$ be the result of thickening $y$ by some value $0<r<R$ with respect to the $\infty$-metric in the $x_3=0$ plane. We then obtain a solid torus $T := Y \times [-r,r]$. The core square has sides of length $2R$ and $T$ consists of beams with square cross-sections with sides of length $2r$.

Allowing two such square tori to overlap at a corner and taking their union, we obtain a 
solid double torus. A cross-section in the $x_3=0$ plane showing certain geometric measurements can be seen in Figure \ref{fig:Square2Torus}. Call the constituent solid tori $X_0^1$ and $X_0^2$, with core square curves $\gamma_0^1$ and $\gamma_0^2$, respectively. Then call the solid double torus $X_0 = X_0^1 \cup X_0^2$ with core curve $\gamma_0 = \gamma_0^1 \cup \gamma_0^2$.
\begin{figure}[h]
	\centering \includegraphics[width = 3.5in]{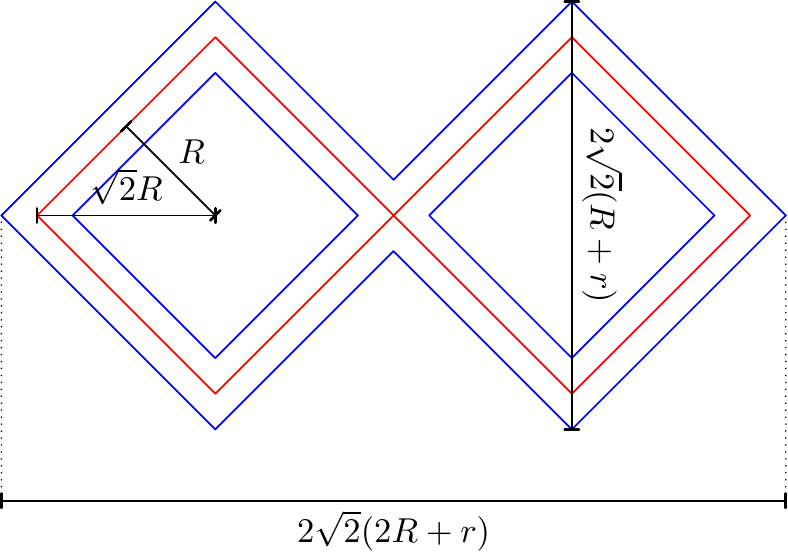}
	\caption{A square double torus.}\label{fig:Square2Torus}
\end{figure} 

For a suitably chosen integer $m$, we now wish to construct solid double tori $X_{1,1}, \ldots, X_{1,m}$ contained in $\interior X_0$ that are linked, are geometrically similar to $X_0$ with common scaling factor $k$, and are arranged along $\gamma_0$ in such a way that the resulting Cantor set has genus 2. Note that $k$ and $m$ depend on each other, and both depend on the value of $r$.

The linking between neighboring double tori is as follows. As with $X_0$, we regard each $X_{1,j}$ as the union of overlapping solid tori $X_{1,j}^1$ and $X_{1,j}^2$, with respective core squares $\gamma_{1,j}^1$ and $\gamma_{1,j}^2$. Then let $\gamma_{1,j} = \gamma_{1,j}^1 \cup \gamma_{1,j}^2$ be the core curve of $X_{1,j}$. We want $\gamma_{1,j}^1$ to form a Hopf link with $\gamma_{1,j-1}^2$, and $\gamma_{1,j}^2$ to form a Hopf link with $\gamma_{1,j+1}^1$, modulo $m$. See Figure \ref{fig:2Link} for an illustration. The angle between subsequent double tori is chosen to be $\pi/2$ for simplicity.
\begin{figure}[h]
	\centering\includegraphics[width = 3.5in]{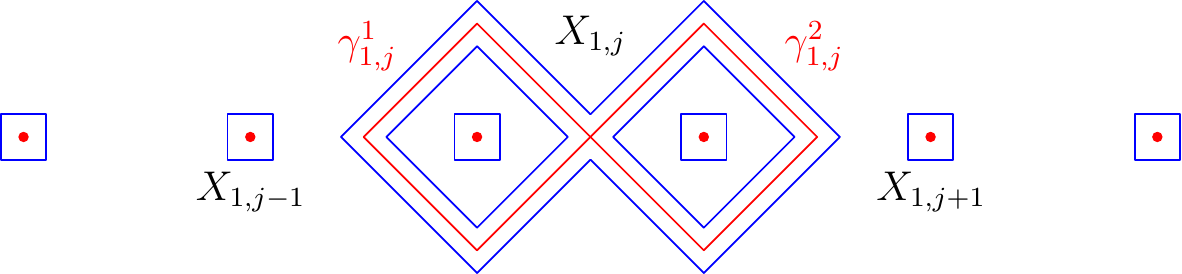}
	\caption{A link between three double tori.}
	\label{fig:2Link}
\end{figure}

Furthermore, the double tori $X_{1,1}, \ldots, X_{1,m}$ are to be arranged along $\gamma_0$ so that the resulting chain is shaped like a figure-eight. To achieve this, we require that there be a four-way linking of double tori at the self-intersection point of $\gamma_0$. See Figure \ref{fig:4WayCore} for an illustration of a four-way linking between the core curves of some double tori.
\begin{figure}[h]
	\centering \includegraphics[width=2in]{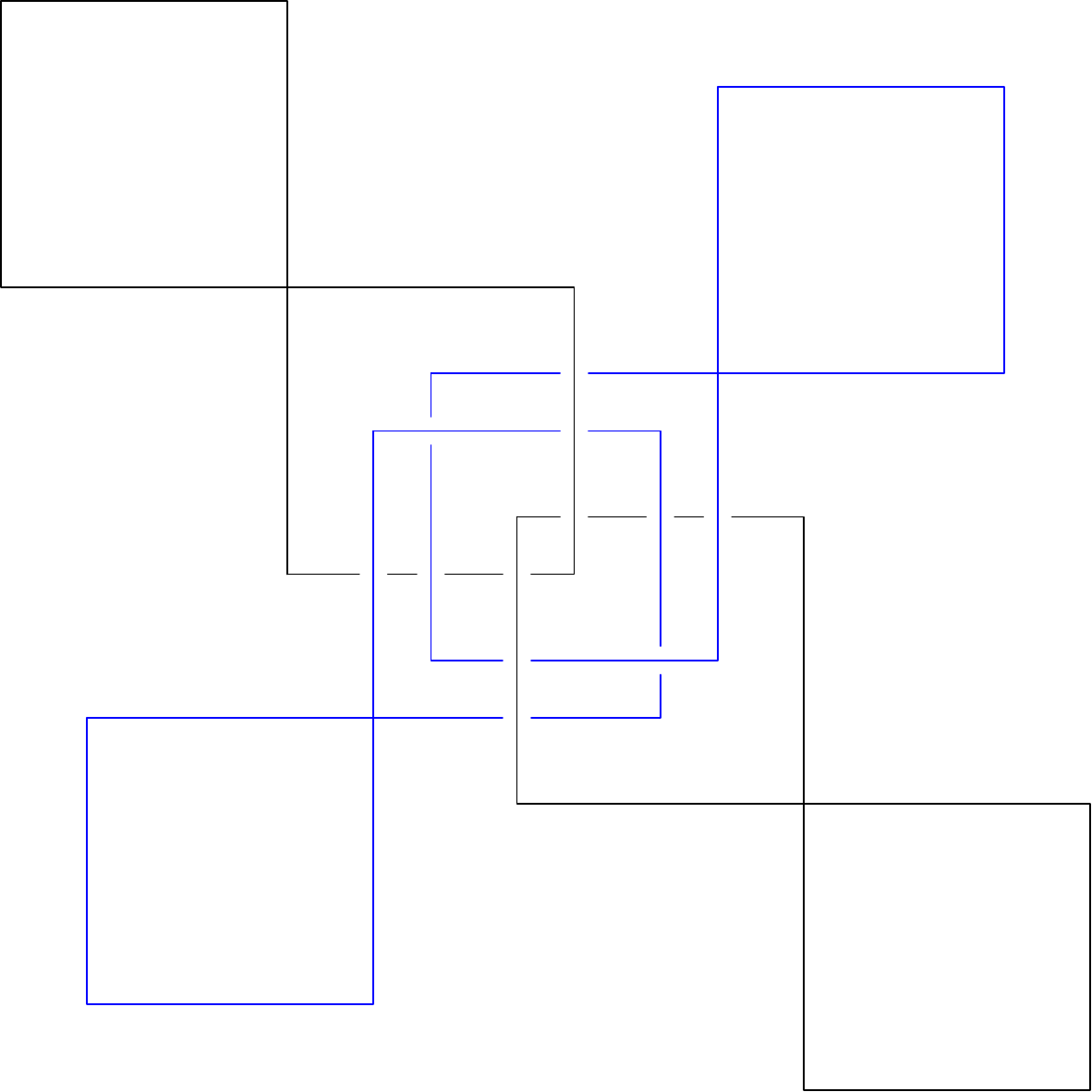}
	\caption{A four-way linking between figure-eight core curves.
	} \label{fig:4WayCore}
\end{figure}
Given a specific choice of position and orientation for the four double tori in question, we will need a bound on the thickness coefficient $r$ to ensure the tori are mutually disjoint.

\subsection{The four-way linking}\label{sec:4way}

It will suffice to consider a four-way linking of solid tori, since our double tori are then obtained by taking the union with a second overlapping torus.

For computational simplicity, place the self-intersection point of $\gamma_0$ at the origin, and orient $\gamma_0$ so that the segments emanating from the self-intersection point follow the $x_1$- and $x_2$-axes, respectively. Choose four square tori, call them $T_1, \ldots, T_4$ with square core curves $\gamma_1, \ldots, \gamma_4$, respectively. Orient the tori so that the intersections of the $\gamma_j$ and the $x_1x_2$-plane happen along the diagonal of the $\gamma_j$. 

Then position the intersection with the $x_1x_2$-plane as in Figure \ref{fig:4WayLink}.
\begin{figure}[h]
	\centering\includegraphics[width = 5in]{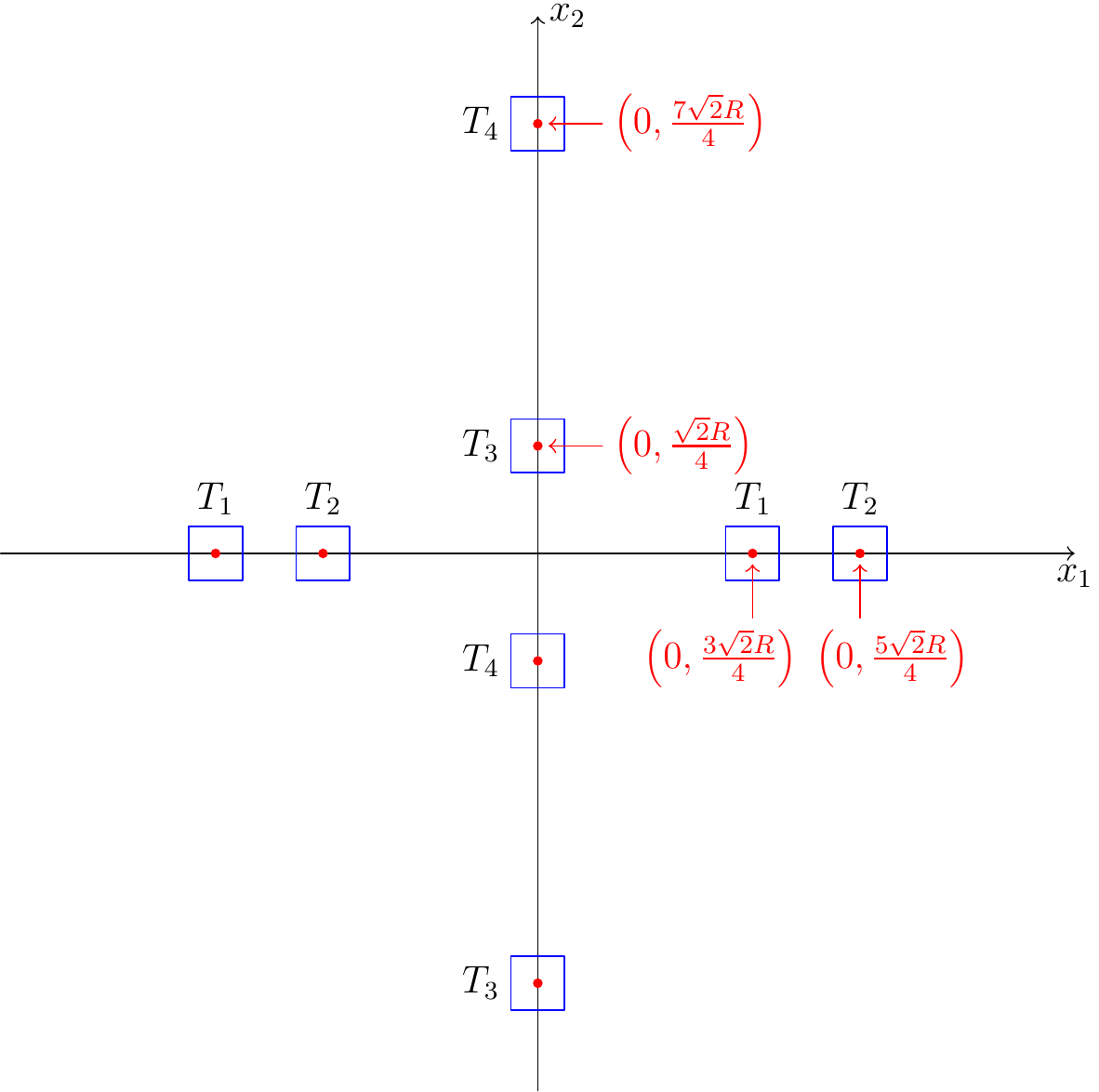}
	\caption{The four-way linking, before rotation.
	}\label{fig:4WayLink}
\end{figure}
The coordinates of the intersection points between the $\gamma_j$ and the $x_1x_2$-plane are chosen to be:\\
\begin{itemize}
	\item $T_1:\, (3\sqrt{2}R/4 ,0,0), (-5\sqrt{2}R/4, 0,0)$
	\item $T_2:\, (5\sqrt{2}R/4 ,0,0), (-3\sqrt{2}R/4, 0,0)$
	\item $T_3:\, (0, 1\sqrt{2}R/4,0), (0, -7\sqrt{2}R/4,0)$
	\item $T_4:\, (0, 7\sqrt{2}R/4,0), (0, -1\sqrt{2}R/4,0)$
\end{itemize}

Finally, rotate the tori as follows:
\begin{align}
T_1& \text{ is rotated about the } x_1\text{-axis by an angle of }-3\pi/8 \label{eq:rot1} \\ 
T_2& \text{ is rotated about the } x_1\text{-axis by an angle of }3\pi/8 \label{eq:rot2}\\
T_3& \text{ is rotated about the } x_2\text{-axis by an angle of }\pi/8 \label{eq:rot3}\\ 
T_4& \text{ is rotated about the } x_2\text{-axis by an angle of }-\pi/8 \label{eq:rot4}
\end{align}
Then the $\gamma_j$ are all disjoint and pairwise form Hopf links. To bound $r$, we estimate the distance between the $\gamma_j$. Thanks to the symmetry of the position and orientation of the tori, it suffices to calculate the distance between only four line segments, call them $L_1, \ldots, L_4$, each in the upper half-space (having $x_3 \geq 0$). After the rotations described in \eqref{eq:rot1} - \eqref{eq:rot4}, the chosen lines have the following vector equations:
\begin{align*}
L_1 &: \begin{pmatrix} 3\sqrt{2}R/4 \\ 0 \\ 0 \end{pmatrix} +t \begin{pmatrix} -1\\\sqrt{2 + \sqrt{2}}/2\\ \sqrt{2 - \sqrt{2}}/2\end{pmatrix} &
L_2 &: \begin{pmatrix} 5\sqrt{2}R/4 \\ 0 \\ 0 \end{pmatrix} +t \begin{pmatrix} -1\\-\sqrt{2 +\sqrt{2}}/2\\ \sqrt{2 -\sqrt{2}}/2\end{pmatrix}\\
L_3 &: \begin{pmatrix} 0\\ \sqrt{2}R/4 \\ 0 \end{pmatrix} +t \begin{pmatrix} -\sqrt{2 -\sqrt{2}}/2\\-1\\ \sqrt{2 +\sqrt{2}}/2\end{pmatrix} &
L_4 &: \begin{pmatrix} 0\\ -\sqrt{2}R/4 \\ 0 \end{pmatrix} +t \begin{pmatrix} \sqrt{2 -\sqrt{2}}/2\\ 1\\\sqrt{2 +\sqrt{2}}/2\end{pmatrix},\\
\end{align*}
for $0 \leq t \leq \sqrt{2}R$. Let $\tau_j$ denote the cylinder around $L_j$ with radius $\sqrt{2}r$. Then we have that
\begin{equation}\label{eq:distance}
	d(T_i, T_j) \geq d(\tau_i, \tau_j) = d(L_i, L_j) - 2\sqrt{2}r.
\end{equation}
The smallest value on the right hand side of \eqref{eq:distance} is obtained for both the pairs $(L_1, L_2)$ and $(L_3, L_4)$. This value equals
	\[\frac{R}{2\sqrt{\frac{5}{2} +\sqrt{2}}} -2\sqrt{2}r. \]
This quantity must be greater than $0$, yielding the following inequality.

\begin{lemma}\label{lemma:rbound}
	Let $\gamma_1, \ldots, \gamma_4$ be squares with side length $2R$ and oriented as above. Suppose that $r>0$ satisfies
		\[r < \frac{R}{4\sqrt{5 +2\sqrt{2}}}, \]
	and let $T_1, \ldots, T_4$ be the square tori obtained by thickening $\gamma_1, \ldots, \gamma_4$ by $r$ as in Section \ref{sec:square}. Then $T_1, \ldots, T_4$ are mutually disjoint and any pair of the $\gamma_1, \ldots, \gamma_4$ forms a Hopf link with each other.
\end{lemma}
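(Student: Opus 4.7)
The strategy is already partially sketched in the paragraphs preceding the lemma: bound the distance between two rotated square tori from below by the distance between their core edges, minus a correction depending on $r$. Specifically, each straight beam of $T_j$ is a $2r \times 2r$ square cross-section extruded along an edge of $\gamma_j$, and is therefore inscribed in a circular cylinder of radius $\sqrt{2}r$ about the line extending that edge. This yields the inequality $d(T_i, T_j) \geq d(L_i, L_j) - 2\sqrt{2}r$ whenever $L_i, L_j$ are the lines through the edges realizing the closest approach, and an analogous bound holds near the corners of $\gamma_j$, where the relevant portion of $T_j$ is contained in a ball of radius $\sqrt{2}r$ around the corner.

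Next, I would use the symmetries of the configuration to reduce the $\binom{4}{2}=6$ pairwise distance computations. Each $T_j$ has two edges in the upper half-space $\{x_3 \geq 0\}$ and two in the lower half-space, and the overall configuration is symmetric enough that it suffices to compute distances only among the four representative upper-half-space edges $L_1,\ldots,L_4$ listed in the statement. I would then apply the standard skew-line distance formula
\[
d(L_i, L_j) \;=\; \frac{|(p_i - p_j)\cdot(v_i \times v_j)|}{\|v_i \times v_j\|}
\]
to each resulting pair; after simplification, the minimum turns out to be $R/(2\sqrt{5/2 + \sqrt{2}})$, attained at the two symmetric pairs $\{L_1, L_2\}$ and $\{L_3, L_4\}$, with all remaining pairs giving strictly larger distances.

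The main obstacle is ensuring that the skew-line distance formula, which computes the distance between infinite lines, agrees with the distance between the finite segments parametrized by $0 \leq t \leq \sqrt{2}R$ that actually comprise the edges; this amounts to checking that the foot of the common perpendicular has both parameter values inside $[0,\sqrt{2}R]$, which is a routine verification using the chosen anchor points. Once the minimum is established, imposing $R/(2\sqrt{5/2 + \sqrt{2}}) > 2\sqrt{2}r$ simplifies algebraically to the stated bound $r < R/(4\sqrt{5 + 2\sqrt{2}})$, proving mutual disjointness of the $T_j$. Finally, the Hopf-link conclusion for each pair $(\gamma_i, \gamma_j)$ is manifest from Figure \ref{fig:4WayLink}: each square core curve meets the flat disk bounded by the other transversely in exactly one point, which is the classical geometric characterization of a Hopf link.
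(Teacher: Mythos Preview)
Your proposal is correct and follows essentially the same approach as the paper: enclose each beam in a cylinder of radius $\sqrt{2}r$, use the symmetries of the configuration to reduce to the four representative edges $L_1,\ldots,L_4$ in the upper half-space, compute their pairwise distances to find the minimum $R/(2\sqrt{5/2+\sqrt{2}})$ at the pairs $(L_1,L_2)$ and $(L_3,L_4)$, and impose that this exceeds $2\sqrt{2}r$. Your added remarks about handling the corners, verifying that the common perpendicular meets the finite segments, and justifying the Hopf-link claim via transverse intersection with a spanning disk are welcome clarifications but do not depart from the paper's line of argument.
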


\subsection{Bounding $m$ and constructing the chain}\label{sec:mbound}

We now construct solid double tori $X_{1,1}, \ldots, X_{1,m}$ along the curve $\gamma_0$ with a four-way linking at the central point of $\gamma_0$. As above, denote by $\gamma_{1,j}$ the core curve of $X_{1,j}$ similar to $\gamma_0$ for $X_0$. Then $X_{1,j}$ has size coefficient $kR$ and thickness coefficient $kr$. Recall that $k$ and $m$ depend on each other. A preliminary bound for $k$, which will be overridden later, is determined so that the length of each $X_{1,j}$ fits inside half the thickness of $X_0$. In other words,
\begin{equation}\label{eq:kbound}
	2\sqrt{2}k(2R +r) <r.
\end{equation}
This ensures that, no matter the orientation of the $X_{1,j}$, as long as $\gamma_{1,j}$ touches $\gamma_0$, we have that $X_{1,j}$ is contained in $\interior X_0$. Henceforth assume that $k$ satisfies \eqref{eq:kbound}.

For simplicity, position $X_0$ such that its central point is at the origin, its length runs along the $x_1$-axis, and its width is parallel to the $x_2$-axis. We can then regard $\gamma_0$ as split into eight line segments, two in each quadrant of the $x_1x_2$-plane. Let $\gamma_0'$ be the segment in the first quadrant emanating from the origin. Note that the length of $\gamma_0'$ is $2R$.

Choose a point $p_1$ on $\gamma_0'$ to be $\sqrt{2}kR/4$ units from the origin. Now let $n$ be a sufficiently large even number, and position points $p_2, \ldots, p_n$ sequentially along $\gamma_0'$ such that the Euclidean distance $d(p_i, p_{i+1}) = 3\sqrt{2}kR$ for $i\in \{1, \ldots, n-1\}$. Note that then $p_n$ is $3\sqrt{2}kR -\sqrt{2}kR/4$ units short of the terminal point of $\gamma_0'$. Finally, let $p_{n+1}$ be the point $\sqrt{2}kR/4$ units past the terminal point of $\gamma_0'$, still in line with $p_1, \ldots, p_n$.

These points will be used as anchors for the double tori $X_{1,1}, \ldots, X_{1,n}$. For ease of discussion, we define orientation vectors as follows. For $j\in\{1. \ldots, m \}$, let $\mathbf{v}_{j,1}$ and $\mathbf{v}_{j,2}$ be the unit vectors in the direction of the length and width of $X_{1,j}$ respectively.

Note that the distance between $p_i$ and $p_{i+1}$ is the same as the distance from a lengthwise corner of $\gamma_{1,i}$ to the midpoint of the opposing hole of $X_{1,i}$. So, for each $i$, position $X_{1,i}$ such that $p_i$ lies at the center of one hole of $X_{1,i}$, and $p_{i+1}$ lies at the opposite terminal corner of $\gamma_{1,i}$ (see Figure \ref{fig:points}).
\begin{figure}[h]
	\centering\includegraphics[width = 3in]{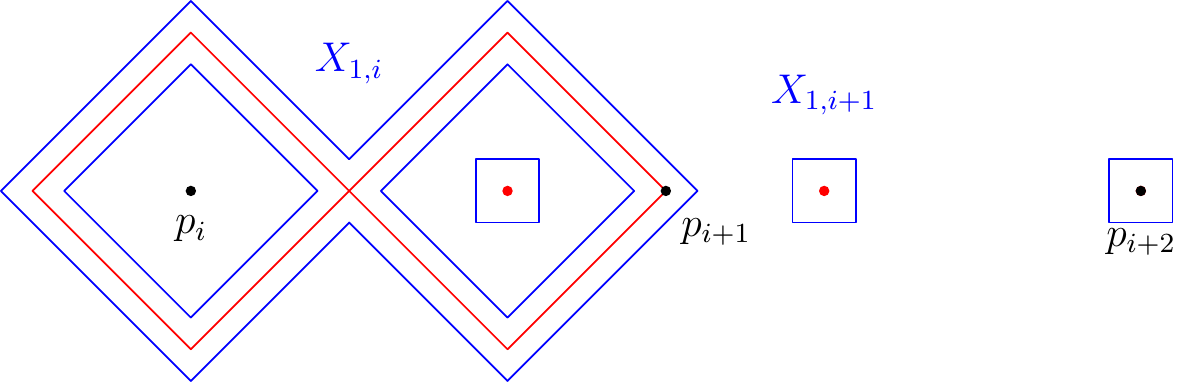}
	\caption{$X_{1,i}$ as determined by $p_i$ and $p_{i+1}$.} \label{fig:points}
\end{figure}
This means that $\mathbf{v}_{i,1} = \frac{\sqrt{2}}{2} \left\langle 1,1,0 \right\rangle$ for each $i$. To ensure the linking of sequential tori, if $i$ is an odd integer between $1$ and $n$, let
\[\mathbf{v}_{i,2} = \left\langle \frac{\sqrt{4 +2\sqrt{2}}}{4},\, \frac{-\sqrt{4 +2\sqrt{2}}}{4},\, \frac{\sqrt{2- \sqrt{2}}}{2} \right\rangle,\]
which has an angle of $3\pi/8$ with respect to the $x_3$-axis, and let
\[\mathbf{v}_{i+1,2} = \left\langle -\frac{\sqrt{4-2\sqrt{2}}}{4},\, \frac{\sqrt{4-2\sqrt{2}}}{4},\, \frac{\sqrt{2 +\sqrt{2}}}{2} \right\rangle,\]
which has an angle of $\pi/8$ with respect to the $x_3$-axis. This way, the widths of the double tori $X_{1,i}$ and $X_{1, i+1}$ are perpendicular to each other for each $i \in \{ 1, \ldots, n\}$, guaranteeing linking with the given spacing.

Let $\rho_1$ denote a clockwise (with respect to the $x_1x_2$-plane) rotation by the angle $\pi/2$ around the vertical line through the point $(2\sqrt{2}R, 0, 0)$ (the center of the hole of $X_0$ in the $x_1\geq 0$ half-space). Then for $i=n+1, \ldots, 2n$, we have that $\mathbf{v}_{i,1} = \frac{\sqrt{2}}{2}\left\langle 1, -1, 0\right\rangle$ and
\[\mathbf{v}_{i,2} =
\begin{cases}
\left\langle \frac{-\sqrt{4 +2\sqrt{2}}}{4},\, \frac{-\sqrt{4+2\sqrt{2}}}{4},\, \frac{\sqrt{2- \sqrt{2}}}{2} \right\rangle & \text{if } i \text{ is odd,}\\
\left\langle \frac{\sqrt{4-2\sqrt{2}}}{4},\, \frac{\sqrt{4-2\sqrt{2}}}{4},\, \frac{\sqrt{2 +\sqrt{2}}}{2} \right\rangle &\text{if } i \text{ is even.}
\end{cases} \]
Note that $X_{1,n}$ and $X_{1, n+1}$ are then linked in the same manner as the tori $T_3$ and $T_2$ from Section \ref{sec:4way}.

Now apply $(\rho_1)^2$, that is, for $i = 1,\ldots,2n$ define $X_{1, i+2n} = (\rho_1)^2(X_{1,i})$. Similarly to the previous rotation, $X_{1,2n}$ and $X_{1, 2n+1}$ are linked, again in the same manner as $T_3$ and $T_2$ from Section \ref{sec:4way}. Additionally, $X_{1,1}$ and $X_{1,4n}$ are now linked, again in a way corresponding to $T_2$ and $T_3$ (in that order).

Finally, let $\rho_2$ denote a rotation by the angle $\pi$ around the $x_3$-axis. For $i = 1,\ldots, 4n$, define $X_{1,i+ 4n} = \rho_2(X_{1,i})$. We then have a chain $X_1 = \bigcup_{j=1}^m X_{1,j}$ of $m = 8n$ linked double tori along $\gamma_0$. By the rotational symmetry of the four-way linking pointed out in Section \ref{sec:4way}, $X_1$ has the desired four-way linking at the origin, consisting of the tori $X_{1,1},\, X_{1,4n},\, X_{1,4n+1},$ and $X_{1,8n}$, corresponding to the tori $T_2,\, T_3,\, T_4$, and $T_1$, respectively.

Note that, since $n$ is even, we have that $m =8n$ is a multiple of 16. To find a minimal $m$ for which this construction is possible, recall that each pair of consecutive points from $p_1, \ldots, p_n$ determines a line segment of length $3\sqrt{2}kR$. So the total length of the path along which the double tori $X_{1,1}, \ldots, X_{1,n}$ are arranged is $3\sqrt{2}kRn$. But this is equal to the length of $\gamma_0'$, which is $2R$. This gives us the equation
\[3\sqrt{2}kRn = 2R.\]
Replacing $n$ with $m/8$ and rearranging yields		 
\begin{equation}\label{eq:mbound}
m = \frac{16}{3\sqrt{2}k}. 
\end{equation}
So, if we choose $k$ to satisfy \eqref{eq:kbound} and such that $m$ in \eqref{eq:mbound} is a multiple of 16, then we can construct the chain $X_1 = \bigcup_{j=1}^m X_{1,j}$.

\begin{example}
	Let $R = 1$. Then $r = 0.08$ satisfies Lemma \ref{lemma:rbound}, and any $k \leq 0.013$ satisfies \eqref{eq:kbound}. Under these conditions, the maximum value of $k$ for which $m$ in \eqref{eq:mbound} is a multiple of 16 is $k = \frac{1}{6\sqrt{2}}$, yielding a chain of $m=32$ tori.
\end{example}

\begin{remark}
	The construction of $X_1$ given in this section is not optimal. Relaxing the constraint on $k$ and arranging the $X_{1,j}$ differently could potentially yield a geometrically self-similar Cantor set using fewer than 32 double tori in $X_1$. However, the value $m=32$ is likely the smallest number of double tori possible using this particular construction method. 
\end{remark}

\subsection{The Cantor Set}\label{sec:cansetconstruction}

Let $X_0$ be a genus 2 solid torus as in Section \ref{sec:square}, with size coefficient $R$ and thickness coefficient $r$ satisfying Lemma \ref{lemma:rbound}. Let $k$ satisfy \eqref{eq:kbound}, and accordingly let $m$ be a multiple of 16 satisfying \eqref{eq:mbound}. Let $X_{1,1},\ldots, X_{1,m}$ be genus 2 solid tori arranged as in Section \ref{sec:mbound}, and fix sense-preserving similarities $\phi_j: X_0 \to X_{1,j}$ for $j = 1,\ldots,m$. Define $X_1 = \bigcup_{j=1}^m X_{1,j}$. Now for $n \geq 2$, define
\[X_n = \bigcup_{j=1}^m \phi_j(X_{n-1}). \]
Then the set
\[X = \bigcap_{n=0}^\infty X_n \]
is a Cantor set with genus at most 2. That $X$ is wild can be seen using any of the many well-known proofs of the wildness of Antoine's necklace (see, for example, \cite{Moise}).

\section{A Julia set of genus $2$}\label{sec:uqrmap}

The construction of a uqr map $f$ of polynomial type having the Cantor set $X$ as its Julia set is a modification of the method used by Wu and the first named author in \cite{FW}.

\subsection{A basic covering map}\label{sec:covering}
Recall the coefficients $R$ and $r$ from Section \ref{sec:cantorcon} describing the size and thickness, respectively, of the solid double torus $X_0$. Set $R=1$, and assume $r>0$ satisfies Lemma \ref{lemma:rbound}. Assume also that $X_0$ is positioned and oriented in $\R^3$ as in Section \ref{sec:mbound}. Towards the proof of Theorem \ref{thm:uqrmap}, we construct a BLD degree $m$ branched covering map
\[F: X_0 \setminus \interior\left(\bigcup_{j=1}^m X_{1,j} \right) \to \overline{B(0,4)} \setminus \interior(X_0) \]
satisfying $F|\partial X_{1,j} : \partial X_{1,j} \to \partial X_0 = \phi_j^{-1}$ for the tori $X_{1,1}, \ldots, X_{1,m}$ fixed in Section \ref{sec:mbound}.

Let $\iota_1$ be the involution
\[\iota_1:(x_1, x_2, x_3) \mapsto (-x_1, -x_2, x_3). \]
By construction, the double tori $X_{1,1}, \ldots, X_{1,m}$ are symmetric with respect to $\iota_1$. The quotient $q_1: X_0 \to X_0/\left\langle \iota_1\right\rangle$ is then a degree 2 sense-preserving map satisfying
\begin{itemize}
	\item $q_1(X_0)$ is a solid torus unknotted in $\R^3$,
	\item $q_1(X_{1,j}) = q_1(X_{1, m-j+1})$ is a double torus unknotted in $q_1(X_0)$,
	\item $\bigcup_{j=1}^m q_1(X_{1,j})$ is a chain of $m/2$ linked double tori following a core curve of the torus $q_1(X_0)$.
\end{itemize}
For the sake of convenient geometry, we modify $q_1(X_0)$ in a few ways. First, translate $q_1(X_0)$ so that the center of the hole of the torus is at the origin. Then apply a map that is radial with respect to the $x_3$-axis, making $q_1(X_0)$ round in two senses:
\begin{itemize}
	\item the core curve traced by the chain $\bigcup_{j=1}^m q_1(X_{1,j})$ is a circle in the $x_1x_2$-plane centered at the origin;
	\item every cross section of $q_1(X_0)$ taken perpendicular to the above core curve is a geometric disk.
\end{itemize}
Additionally, we deform $\interior(q_1(X_0))$ so that all the double tori $q_1(X_{1,1}), \ldots, q_1(X_{1,m})$ satisfy $\rho(q_1(X_{1,j})) = \rho(q_1(X_{1,j+2}))$ for $j\in \{1, \ldots, m-2\}$, $\rho(q_1(X_{1, m-1})) = q_1(X_{1,1})$, and $\rho(q_1(X_{1,m})) = q_1(X_{1,2})$, where $\rho$ is the rotation about the $x_3$-axis by an angle $8\pi/m$,
\[\rho(r, \theta, x_3) = (r, \theta + 8\pi/m, x_3). \]
This deformation is made to preserve the fact that all the $q_1(X_{1,j})$ remain geometrically similar to each other. Finally, if necessary, rotate $q_1(X_0)$ around the $x_3$-axis to ensure that $q_1(X_{1,1}) = q_1(X_{1, m/2+1})$ and $q_1(X_{1,m/2}) = q_1(X_{1, m})$ are linked with the $x_1$-axis such that they are symmetric with respect to a rotation about the $x_1$-axis by an angle $\pi$. For the sake of notational simplicity, assume that the map $q_1$ already incorporates all of these modifications.

Let $\omega:\R^3 \to \R^3$ be the degree $m/4$ winding map
\[\omega(r, \theta, x_3) = (r, \theta m/4, x_3). \]
Then $\omega:q_1(X_0) \to q_1(X_0)$ is an unbranched cover that maps all $q_1(X_{1,j})$ with odd indices to $\omega(q_1(X_{1,1}))$ and all $q_1(X_{1,j})$ with even indices to $\omega(q_1(X_{1,2}))$. By construction, $\omega(q_1(X_{1,1}))$ and $\omega(q_1(X_{1,2}))$ are linked inside $q_1(X_0)$ as in Figure \ref{fig:Wrapping},
\begin{figure}[h]
	\centering\includegraphics[width=4.2in]{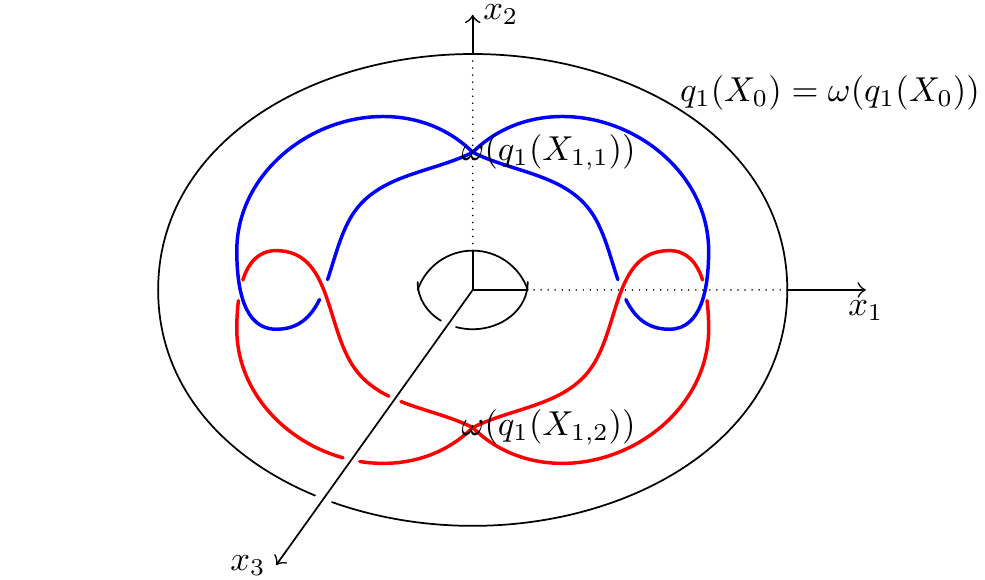}
	\caption{}\label{fig:Wrapping}
\end{figure}
and are symmetric to each other via a rotation about the $x_1$-axis by an angle $\pi$. Let $\iota_2$ be the involution for this rotation, that is
\[\iota_2:(x_1, x_2, x_3) \mapsto (x_1, -x_2, -x_3). \]
The quotient $q_2: q_1(X_0) \to q_1(X_0) / \left\langle\iota_2 \right\rangle$ is then a degree 2 sense preserving map under which $q_2(\omega(q_1(X_{1,1}))) = q_2(\omega(q_1(X_{1,2})))$ is a double torus unknotted in the 3-cell $q_2(q_1(X_0))$. For more details on such constructions, see \cite[p. 294]{Rolfsen}. Assuming $q_2$ incorporates some more translations and deformations, the map $q_2\circ \omega \circ q_1$ is a degree $m$ branched cover from $X_0$ onto $\overline{B(0,4)}$ mapping each $X_{1,j}$ onto $X_0$. To obtain a BLD, and hence quasiregular, cover, we consider a PL version of this map.

Give $X_0$ a $C^1$-triangulation $g:|U| \to X_0$ by a simplicial complex $U$ that respects the involutions $\iota_1$ and $\iota_2$, and has both $g^{-1}(q_1^{-1}(\bigcup X_{1,j}))$ and $g^{-1}(q_1^{-1}(\omega(q_1(X_{1,1})) \cup \omega(q_1(X_{1,2}))))$ as subcomplexes. We then identify $q_1(X_0)$ with a simplicial complex $V$ via $h:|V| \to q_1(X_0)$ such that $q_1 \circ g: U \to q_1(X_0)$ is simplicial. It then follows that
\begin{itemize}
	\item $h^{-1}(\bigcup q_1(X_{1,j}))$ is a subcomplex of $V$,
	\item $h^{-1}(\omega(q_1(X_{1,1})) \cup \omega(q_1(X_{1,2})))$ is a subcomplex of $V$, and
	\item $h$ respects $\iota_2$.
\end{itemize}
Finally, identify $q_2(q_1(X_0))$ with a simplicial complex $W$ via $i:|W| \to q_2(q_1(X_0))$ such that $q_2 \circ q_1 \circ g$ is simplicial. Then $i^{-1}(q_2(\omega(q_1(X_{1,1}))))$ is a subcomplex of $W$.

Refine $|U|$ and $|W|$ if necessary to ensure that $q_2 \circ \omega \circ q_1 \circ \phi_j |X_0$ are simplicial and ambient isotopic. This is possible since $q_2 \circ \omega \circ q_1 \circ \phi_j$ embeds $X_0$ unknottedly into $|W|$. Then there exists a PL map $\eta:|W| \to |W|$ which is identity on $\partial |W|$ so that $\eta \circ i^{-1}|X_0 = q_2 \circ \omega \circ q_1 \circ \phi_j|X_0$. Set $\zeta = \eta\circ i^{-1}$, and then $F := \zeta^{-1} \circ q_2 \circ \omega \circ q_1$ is a BLD degree $m$ branched covering satisfying $F|\partial X_{1,j} = \phi_j^{-1}$.

\subsection{A genus 2 Julia set}\label{sec:juliaset}
Let $m=d^2$ be a sufficiently large square that is a multiple of 16 and let $X$ be the Cantor set from the previous section. Write $B_0 = B(0,4)$, $B_{-1} = B(0,4^d)$, and write $\R^3$ as two disjoint unions, one for the domain and one for the codomain of $f$, as follows:
\[\R^3 = X_1 \cup (X_0 \setminus X_1) \cup (B_0 \setminus X_0) \cup (\R^3 \setminus B_0) \]
and
\[\R^3 = X_0 \setminus (B_0 \setminus X_0) \cup (B_{-1} \setminus B_0) \cup (\R^3 \setminus B_{-1}). \]
The uqr map $f$ is then defined as follows:
\begin{enumerate}
	\item Set $f:\overline{X_0 \setminus X_1} \to \overline{B_0 \setminus X_0}$ to be the degree $m$ branched covering map $F$ from the previous section.
	
	\item Extend $f$ into $X_1$ by setting $f|{X_1}$ to be $\phi_j^{-1}:X_{1,j} \to X_0$ for each $j \in \{1, \ldots, m\}$.
	
	\item Define $f:\R^3 \setminus \interior(B_0) \to \R^3 \setminus \interior(B_{-1})$ to be the restriction of the uqr map $g$ of degree $m$ from Theorem \ref{thm:power}. By definition of the map $g$, it maps $S(0,4)$ onto $S(0,4^d)$. We further remark that $g$ is orientation-preserving.
	
	\item Since $f|\partial B_0$ is a BLD degree $m$ branched cover onto $\partial B_{-1}$ and $f|\partial X_0$ is also a BLD degree $m$ branched cover onto $\partial B_0$, we can extend the boundary map to a BLD degree $m$ branched cover $f:B_0 \setminus \interior (X_0) \to B_{-1} \setminus \interior(B_0)$ by the Berstein and Edmonds extension theorem, Theorem \ref{thm:BE}.
\end{enumerate}
Then $f:\R^3 \to \R^3$ is indeed a quasiregular map. We now prove most of Theorem \ref{thm:uqrmap} with the following two lemmas.
\begin{lemma}\label{lem:uqr1}
	The map $f$ is a uniformly quasiregular mapping of polynomial type.
\end{lemma}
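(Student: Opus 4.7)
The plan is to follow the template of Fletcher-Wu \cite{FW}: first check that $f$ is quasiregular, then that iterates have uniformly bounded dilatation, and finally verify the behavior at $\infty$. The construction of $f$ was designed precisely so that the pieces glue continuously: $F|\partial X_{1,j} = \phi_j^{-1}$ by the choice of boundary data in Section~\ref{sec:covering}; the Berstein--Edmonds extension is constructed to have boundary data $F|\partial X_0$ on $\partial X_0$ and $g|\partial B_0$ on $\partial B_0$; and $g$ is globally defined on $\R^3 \setminus \interior(B_0)$. So $f:\R^3 \to \R^3$ is continuous, and each piece is quasiregular individually: the similarities $\phi_j^{-1}$ are conformal, the map $F$ is BLD (hence qr), the Berstein--Edmonds extension is BLD, and $g$ is uqr. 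A standard gluing result for piecewise quasiregular maps defined on closed pieces meeting along sets of measure zero then gives that $f$ is $K$-quasiregular for $K = \max\{K_F, K_{BE}, K_g\}$.

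The core of the lemma is the uniform bound on $K(f^n)$. Here I would exploit the forward-invariant decomposition
\[
f(X_1) \subset X_0, \quad f(X_0 \setminus X_1) \subset B_0 \setminus X_0, \quad f(B_0 \setminus X_0) \subset \R^3 \setminus \interior(B_0), \quad f(\R^3 \setminus B_0) \subset \R^3 \setminus B_0,
\]
where the last inclusion uses \eqref{eq:power} and $4^d > 4$. Consequently, the orbit of any $x \in \R^3$ follows one of two patterns: either it remains in $X_1$ for all time (in which case every factor $Df(f^i(x))$ is that of a similarity, hence contributes $1$ to the dilatation), or there is a unique instant at which the orbit leaves $X_1$ through $X_0 \setminus X_1$, then transits through $B_0 \setminus X_0$, and is thereafter trapped in $\R^3 \setminus B_0$ where $f$ coincides with $g$. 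By the multiplicativity of dilatation under composition, for any $n$ and any $x$,
\[
K(f^n, x) \;\leq\; K_F \cdot K_{BE} \cdot K(g^{n'}) \;\leq\; K_F \cdot K_{BE} \cdot K_g,
\]
where $K(g^{n'}) \le K_g$ uniformly in $n'$ by the uniform quasiregularity of Mayer's map (Theorem \ref{thm:power}). This gives the uniform bound, so $f$ is uqr.

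For polynomial type, since $f$ agrees with $g$ on $\R^3 \setminus \interior(B_0)$ and $g$ satisfies $g(B(0,r)) = B(0,r^d)$ for all $r$, we have $|f(x)| = |x|^d \to \infty$ as $|x| \to \infty$; hence $f$ extends continuously to $\overline{\R^3}$ with $f(\infty) = \infty$ as a superattracting fixed point, which is what it means for $f$ to be of polynomial type. The main obstacle to the argument is really a bookkeeping one, namely verifying that the BLD pieces glue to a genuine quasiregular map on $\R^3$ — but this is forced by the design of $F$ and the Berstein--Edmonds extension, and the conceptual heart of the proof is the simple observation that the non-conformal parts of the dynamics can be visited at most a bounded number of times along any orbit.
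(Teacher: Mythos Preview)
Your proof is correct and follows essentially the same approach as the paper: track an orbit, observe that it either stays in $X_1$ forever (where $f$ is conformal) or passes through the two BLD pieces at most once each before being absorbed by the uqr power map $g$, giving a uniform dilatation bound. Your treatment is in fact more detailed than the paper's --- you spell out the forward-invariant decomposition and the gluing of the qr pieces explicitly, and you justify ``polynomial type'' via the behavior $|f(x)|=|x|^d$ near infinity rather than the paper's terser appeal to $f$ having finite degree.
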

\begin{proof}
	Let $x \in \R^3$. If the orbit of $x$ under $f$ always remains in $X_1$, then, since $f|X_1$ is a conformal similarity, the dilatation of $f^n$ at $x$ will always equal 1.

	Suppose then that the orbit of $x$ leaves $X_1$. Then, after iterating through at most finitely many conformal maps and at most two quasiregular maps, $f^{n_0}(x_0) \in \R^3 \setminus \overline{B(0,4)}$ for some $n_0 \in \N$. From this point on, $f$ agrees with the uqr power map $g$ of degree $m$, and hence the dilatation will remain bounded. In summary, the orbit of $x$ consists of finitely many conformal maps, at most two quasiregular maps, and then a uqr map. So the dilatation of $f^n$ remains uniformly bounded on all of $\R^3$ as $n\to \infty$. Hence $f$ is uqr.

	Since $f$ has finite degree $m$, $f$ is of polynomial type.
\end{proof}
\begin{lemma}\label{lem:uqr2}
	The Julia set of $f$ is equal to $X$.
\end{lemma}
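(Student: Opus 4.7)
The plan is to identify the escaping set explicitly as $I(f) = \R^3 \setminus X$ and then apply Theorem \ref{thm:escaping}. Since $X$ is a Cantor set in $\R^3$, it is closed and nowhere dense, so once $I(f) = \R^3 \setminus X$ is established, $\partial I(f) = X$ and $J(f) = X$ follow immediately.

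First I would verify the forward invariance $f(X) \subset X$, which gives $X \cap I(f) = \emptyset$. If $x \in X$, then in particular $x \in X_1$, so $x$ lies in a unique $X_{1,j}$ and $f(x) = \phi_j^{-1}(x)$. Because the $X_{1,j}$ are pairwise disjoint, one has $X_n \cap X_{1,j} = \phi_j(X_{n-1})$ for every $n \geq 1$, so $\phi_j^{-1}(x) \in X_{n-1}$ for every $n$, and therefore $f(x) \in \bigcap_{n \geq 0} X_n = X$. The orbit of $x$ then stays in the bounded set $X \subset X_0$, so $x$ is not escaping.

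Next I would show that every $x \in \R^3 \setminus X$ escapes, splitting into three cases according to the piecewise definition of $f$. If $x \in \R^3 \setminus B_0$, then $f$ agrees there with the Mayer power map $g$ of Theorem \ref{thm:power}; since $g(B(0,r)) = B(0,r^d)$ with $d = \sqrt{m} \geq 4$, iterating gives $|f^k(x)| = |x|^{d^k} \to \infty$. If $x \in B_0 \setminus X_0$, then by construction $f(x) \in B_{-1} \setminus B_0$ and the previous case finishes the job. The remaining case is $x \in X_0 \setminus X$: choose the smallest $n_0 \geq 1$ with $x \notin X_{n_0}$; using the identity $X_n \cap X_{1,j} = \phi_j(X_{n-1})$, a short induction shows that $f^k(x) \in X_{n_0-1-k} \setminus X_{n_0-k}$ for $0 \leq k \leq n_0 - 1$, so $f^{n_0-1}(x) \in X_0 \setminus X_1$, and from that point the previous two cases carry the orbit to infinity.

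Combining the two steps gives $I(f) = \R^3 \setminus X$, hence $\partial I(f) = X$, and Theorem \ref{thm:escaping} yields $J(f) = X$. I do not expect any substantial obstacle beyond the inductive bookkeeping in the third case above: one must track both the depth in the nested defining sequence and the branch $\phi_j^{-1}$ used at each iterate, verifying that the depth strictly decreases by one per application of $f$ until the orbit is expelled from $X_1$ into $B_0 \setminus X_0$.
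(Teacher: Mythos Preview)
Your proposal is correct and follows essentially the same strategy as the paper: both arguments show that the orbit of any point outside $X$ eventually leaves $X_0$ and is then driven to infinity by the power map, identify $\partial I(f)$ with $X$, and invoke Theorem~\ref{thm:escaping}. Your write-up is in fact more explicit than the paper's, spelling out the depth-decreasing induction $f^k(x) \in X_{n_0-1-k} \setminus X_{n_0-k}$ that the paper summarizes with the phrase ``by construction, if $x$ does not leave $X_0$, then $x \in X$,'' and replacing the paper's neighborhood argument for $X = \partial I(f)$ with the cleaner observation that a Cantor set is closed and nowhere dense.
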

\begin{proof}
	Note first that, by construction, $f(X) = X$.

	Now let $x \in \R^3$. If the orbit of $x$ under $f$ at any point leaves $X_0$, then $x \in I(f)$, as it is pushed to infinity by the uqr power map. By construction, if $x$ does not leave $X_0$, then $x \in X$.

	If $x \in X$, then any sufficiently small neighborhood of $x$ will intersect the boundary of $X_n$ for some $n$. Since $\partial X_n \subset I(f)$ by the preceding argument, we conclude that $X= \partial I(f)$. Since $f$ is uqr, we have by Theorem \ref{thm:escaping} that $\partial I(f) = J(f)$. Hence $J(f) = X$.
\end{proof}
To complete the proof of Theorem \ref{thm:uqrmap}, it remains to prove Theorem \ref{thm:cantorset}. That is, it remains to prove that the Cantor set $X$ has genus 2.

\section{Proving the genus is $2$}\label{sec:genus}

To prove that $g(X) =2$, we will prove Theorem \ref{thm:cantorset}, that is, that the local genus $g_x(X) = 2$ for all $x\in X$. Since $g(X) \geq g_x(X)$ (see \cite{Z1}), this together with the defining sequence from Section \ref{sec:cantorcon} will yield the desired result. Note that, since $X$ is wild, we must have that $g(X) \geq 1$ (see \cite{Bing}).

For this section we will need some basic knot theoretical properties of the solid torus $T$. The arguments in this paragraph follow \cite[Section 2]{BSG} and see, for example, \cite[Section 2.E]{Rolfsen} and \cite[Section 4.2]{Cromwell} for more details. Let $\overline{\D}$ be the closed unit disk. Recall that a homeomorphism $h:S^1 \times \overline{\D} \to T \subset S^3$ is called a \emph{framing} of $T$. The two simple closed curves $h(S^1 \times *)$ and $h(*\times \partial \overline{\D})$, where $*$ denotes a point in $S^1$ or $\partial \overline{\D}$, respectively, are called a \emph{longitude} and \emph{meridian} of $T$. Up to an ambient isotopy of $T$, any two meridians of $T$ are equivalent. However, there are only two longitudes of $T$ that have the property of being nullhomologous in the complement of $T$, up to isotopy. A framing producing such a longitude is called the \emph{preferred framing} of $T$. A \emph{core curve} of $T$ is then $h(S^1 \times \{ 0 \})$, where $0$ denotes the center of the disk $\overline{\D}$. Up to isotopy of $S^3$ the core curve does not depend on $h$. In fact, the regular neighborhood of any simple closed curve $t\subset S^3$ is a solid torus uniquely determined up to isotopy. So, as far as knots and links are concerned, the behavior of a solid torus $T$ is identical to that of any core curve $t\subset T$. For convenience, we summarize the most relevant points and conclusions in the following lemma.

\begin{lemma}\label{lem:TorusKnot}
	Let $T \subset S^3$ be a solid torus, and let $t$ be any core curve of $T$. Then:
	\begin{enumerate}[(i)]
		\item $T$ is ambiently isotopic to a regular neighborhood of $t$. Hence any linking behavior of $T$ is equivalent to that of $t$.
		\item In particular, $T$ is unknotted if and only if $t$ is unknotted.
		\item The closure of $S^3\setminus T$ is an unknotted solid torus if and only if $T$ is unknotted.
	\end{enumerate}
\end{lemma}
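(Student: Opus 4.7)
The plan is to view each part as an application of standard facts about tubular (regular) neighborhoods of simple closed curves in $S^3$, together with the self-duality of the genus-one Heegaard splitting.

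For part (i), I would begin by noting that a framing $h: S^1 \times \overline{\D} \to T$ exhibits $T$ itself as a tubular neighborhood of the core curve $t = h(S^1 \times \{0\})$, via the obvious disk-bundle structure. By the uniqueness of tubular neighborhoods for a smooth (or PL) submanifold in a smooth (or PL) manifold, any two closed tubular neighborhoods of $t$ in $S^3$ are ambiently isotopic, with an isotopy supported near $T$. This gives both that $T$ is ambiently isotopic to any regular neighborhood of $t$ and (applied to two choices of framing) that the core curve is well-defined up to isotopy, independent of the framing $h$. Consequently any knot/link invariant of $T$ agrees with the corresponding invariant of $t$.

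For part (ii), suppose first that $T$ is unknotted; by definition there is an ambient isotopy $\psi_s$ of $S^3$ carrying $T$ onto a standardly embedded solid torus $T_0$ whose core is a round circle. Then $\psi_1(t)$ is a core curve of $T_0$, so by the invariance in (i) it is isotopic to the standard unknotted circle, whence $t$ itself is unknotted. Conversely, if $t$ is unknotted, choose an ambient isotopy taking $t$ to a standard round circle $c$; by part (i), $T$ is carried to a regular neighborhood of $c$, which is a standard solid torus. Hence $T$ is unknotted.

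For part (iii), the forward direction is direct: the standard unknotted solid torus $T_0 \subset S^3$ has as its complement in $S^3$ another standard unknotted solid torus $T_0'$ (this is the genus-one Heegaard splitting of $S^3$, realized concretely by $\{x_1^2+x_2^2 \le 1/2\}$ and $\{x_3^2+x_4^2 \le 1/2\}$ in $S^3 \subset \R^4$). An ambient isotopy carrying $T$ to $T_0$ therefore simultaneously carries $\overline{S^3 \setminus T}$ to $T_0'$, showing it is an unknotted solid torus. For the converse, let $T' = \overline{S^3 \setminus T}$ and suppose $T'$ is an unknotted solid torus. Applying the forward direction already proved to $T'$ itself gives that $\overline{S^3 \setminus T'}$ is an unknotted solid torus; but $\overline{S^3 \setminus T'} = T$, so $T$ is unknotted. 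The only subtlety here is that the operation $A \mapsto \overline{S^3 \setminus A}$ really is an involution at the level of solid tori, which follows since $\partial T = \partial T'$ is a common torus and interiors of $T$ and $T'$ exhaust the complement of this surface.

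The one point that requires some care, and which I expect to be the main technical hurdle rather than a deep obstacle, is verifying the uniqueness of tubular neighborhoods in a form applicable to the polyhedral handlebodies used throughout the paper; this is classical in both the smooth and PL categories but must be invoked cleanly. Given that, the three parts follow in sequence with very little additional work.
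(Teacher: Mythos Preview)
Your proposal is correct. The paper does not actually prove this lemma: it is stated without proof as a summary of standard knot-theoretic facts, with the preceding paragraph pointing to \cite[Section 2]{BSG}, \cite[Section 2.E]{Rolfsen}, and \cite[Section 4.2]{Cromwell} for details, and sketching only that ``the regular neighborhood of any simple closed curve $t\subset S^3$ is a solid torus uniquely determined up to isotopy.'' Your argument via uniqueness of tubular/regular neighborhoods for (i)--(ii) and the involutive symmetry of the genus-one Heegaard splitting for (iii) is precisely the standard route those references take, so you have simply supplied the details the paper omits.
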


We also require the following relationship between a solid torus $T$ and sequentially linked figure-eight curves.
\begin{lemma}\label{lem:Hopf}
	Let $\gamma_1^1, \gamma_1^2, \gamma_2^1, \gamma_2^2, \gamma_3^1,$ and $\gamma_3^2$ be unknotted topological circles in $S^3$ satisfying the following:
	\begin{itemize}
	\item For each $i$, $\gamma_i^1$ and $\gamma_i^2$ intersect at a point.
	\item The pairs $(\gamma_1^2, \gamma_2^1)$ and $(\gamma_2^2, \gamma_3^1)$ each form Hopf links.
	\item The pairs $(\gamma_1^2, \gamma_2^2)$ and $(\gamma_2^1, \gamma_3^1)$ are each unlinks.
	\end{itemize}
	
	Define $\gamma_i = \gamma_i^1\cup \gamma_i^2$ for each $i$. Suppose that $T$ is a solid torus containing $\gamma_2$. Then $T$ contains $\gamma_1$ or $\gamma_3$.
\end{lemma}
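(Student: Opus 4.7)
The proof will proceed by contradiction, combining a standard linking-number computation with a compressing-disk argument. Assume $T$ is a solid torus containing $\gamma_2$ but neither $\gamma_1$ nor $\gamma_3$.

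The first stage is algebraic. Let $t$ be a core curve of $T$ and set $a = [\gamma_2^1]$, $b = [\gamma_2^2]$ in $H_1(T) \cong \Z$. For any simple closed curve $C$ disjoint from $T$ we have $\operatorname{lk}(C, \gamma_2^1) = a \cdot \operatorname{lk}(C, t)$ and $\operatorname{lk}(C, \gamma_2^2) = b \cdot \operatorname{lk}(C, t)$, since $\gamma_2^j$ is homologous in $T$ to $a_j$ copies of $t$. If $\gamma_1^2$ were disjoint from $T$, the Hopf link $(\gamma_1^2, \gamma_2^1)$ and unlink $(\gamma_1^2, \gamma_2^2)$ would force $a\cdot\operatorname{lk}(\gamma_1^2,t)=\pm1$ and $b\cdot\operatorname{lk}(\gamma_1^2,t)=0$, hence $a=\pm1$ and $b=0$. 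Symmetrically, if $\gamma_3^1$ were disjoint from $T$, then $a=0$ and $b=\pm1$. These two conclusions are mutually exclusive, so at least one of $\gamma_1^2,\gamma_3^1$ must meet $T$; without loss of generality, $\gamma_1^2 \cap T \neq \emptyset$.

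The second stage is topological: upgrade this intersection to $\gamma_1^2 \subset T$. Because $\gamma_1^2$ is an unknot, it bounds an embedded disk $D \subset S^3$. Put $D$ in general position with $\partial T$, so $D \cap \partial T$ is a disjoint union of arcs (meeting $\gamma_1^2 \cap \partial T$) and closed curves. An innermost closed curve of $D\cap\partial T$ bounds a subdisk $D'\subset D$ whose boundary lies on $\partial T$; either $\partial D'$ is nullhomotopic in $\partial T$ (allowing a disk-swap that reduces $|D\cap\partial T|$) or $D'$ is a compressing disk for $\partial T$ in $S^3\setminus T$ that can be used to compress the torus. Iterating these moves by ambient isotopies that preserve the rest of the configuration forces $\gamma_1^2$ to be either disjoint from $T$ (ruled out by the first stage) or contained in $T$. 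A parallel disk argument for $\gamma_1^1$, using that the single intersection point $p=\gamma_1^1\cap\gamma_1^2$ lies in $T$, then yields $\gamma_1^1\subset T$. In the alternative case $\gamma_3^1\cap T\neq\emptyset$, the symmetric argument gives $\gamma_3\subset T$.

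The main obstacle is the compressing-disk argument in the second stage, since the algebraic linking data only controls curves strictly disjoint from $T$; translating ``meets $T$'' into genuine containment requires care to respect the whole link configuration under each isotopy. The extension to $\gamma_1^1$ is the most delicate piece because no linking hypothesis is placed on $\gamma_1^1$ directly, so the argument must exploit the rigidity of the figure-eight structure and the constraints already forced on $T$ by the containment of $\gamma_2$ and $\gamma_1^2$.
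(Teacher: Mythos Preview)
Your Stage~1 is correct and is a clean homological alternative to what the paper does. The paper argues homotopically: assuming both $\gamma_1^2$ and $\gamma_3^1$ lie in $S^3\setminus T$, each of $\gamma_2^1,\gamma_2^2$ is an unknot in $T$ linked with an unknot outside $T$, hence (via Lemma~\ref{lem:TorusKnot}) each is a core curve of $T$; then $T$ is unknotted, $\gamma_1^2$ and $\gamma_3^1$ are both core curves of the complementary solid torus $\overline{S^3\setminus T}$, and so they would be homotopic there---but they are not homotopic in $\overline{S^3\setminus\gamma_2}\supset\overline{S^3\setminus T}$. Your computation reaches the same contradiction by reading off $(a,b)=(\pm1,0)$ from $\gamma_1^2$ and $(a,b)=(0,\pm1)$ from $\gamma_3^1$. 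Either route settles the lemma once one knows $\gamma_1^2,\gamma_3^1\subset S^3\setminus T$.

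The gap is entirely in Stage~2. The paper does not attempt any such upgrade: its first sentence simply passes from ``$T$ does not contain $\gamma_1$'' to ``$\gamma_1^2$ is outside $T$'', which is legitimate because in every application of the lemma the standing hypothesis $\partial T\cap\partial X_k=\emptyset$ forces each connected $\gamma_i$ to lie entirely on one side of $\partial T$. Your compressing-disk sketch does not recover this. The innermost-circle move only removes \emph{closed} components of $D\cap\partial T$; it does nothing about the arcs that appear precisely when $\gamma_1^2=\partial D$ crosses $\partial T$, so it cannot push $\gamma_1^2$ to one side. Worse, the desired upgrade is false in general: take $T$ unknotted with $\gamma_2^1$ a core and $\gamma_2^2$ a tiny circle meeting it at a point and bounding a disk in $T$; then $\gamma_1^2$ can be a core of $\overline{S^3\setminus T}$ (so $\gamma_1$ lies entirely outside $T$), while any $\gamma_3^1$ Hopf-linked with the tiny $\gamma_2^2$ must enter $T$ but can exit again without linking $\gamma_2^1$, so $\gamma_3$ need not be contained in $T$ either. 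The extension to $\gamma_1^1$ is weaker still, since no linking hypothesis constrains it at all. The clean fix is to record the hypothesis that $\gamma_1\cup\gamma_3$ is disjoint from $\partial T$ (as holds throughout Section~\ref{sec:genus}); your Stage~1 alone then finishes the proof.
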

\begin{proof}
	Observe that, by construction, the $\gamma_i$ are figure-eight curves.
	
	Suppose that $T$ does not contain either $\gamma_1$ or $\gamma_3$. Then both $\gamma_{1}^2$ and $\gamma_{3}^1$ are outside of $T$. Since $\gamma_{2}^1, \gamma_{2}^2 \subset T$ are unknotted and linked with the unknotted circles $\gamma_{1}^2,\gamma_{3}^1 \subset S^3 \setminus T$, respectively, we have by Lemma \ref{lem:TorusKnot} that $T$ is itself unknotted, and that both $\gamma_{2}^1$ and $\gamma_{2}^2$ serve as core curves for $T$. By the same lemma, $\overline{S^3 \setminus T}$ is an unknotted solid torus, and both $\gamma_{1}^2$ and $\gamma_{3}^1$ are core curves for $\overline{S^3 \setminus T}$. So $\gamma_{1}^2$ and $\gamma_{3}^1$ must be homotopic in $\overline{S^3 \setminus T}$. But they are not homotopic in $\overline{S^3 \setminus \gamma_2}$, of which $\overline{S^3 \setminus T}$ is a subset. This is a contradiction.
\end{proof}
\subsection{Idea of the proof}

For convenience, the defining sequence $(X_n)_{n=0}^\infty$ for $X$ constructed in Section \ref{sec:cantorcon} will henceforth be called the \emph{standard defining sequence} for $X$. 

The main idea behind the proof is to suppose that for some $x\in X$, we have $g_x(X) = 1$. Then there must exist an alternate defining sequence $(M_i)$ of $X$ that contains solid tori containing $x$ of arbitrarily small diameter which accumulate on $x$. We will suppose that $T$ is one of these solid tori and study how it must interact with the standard defining sequence.

Since we can choose $T$ so that $\operatorname{diam} (T)$ is as small as we like, we can assume that $T$ is contained in the interior of some solid double torus $X_{I,J}$ from the standard defining sequence. Moreover, since $\partial T$ and $X$ are compact sets in $\R^3$ which do not intersect, we have $d(\partial T, X) = \delta>0$. 
We can therefore find components of the standard defining sequence whose diameters are small enough so that they cover $X\cap T$ and are contained in the interior of $T$. More precisely, we can find $N\in \N$ so that $\partial X_{I+n} \cap \partial T = \emptyset$ for all $n \geq N$.

In other words, at some level of the standard defining sequence, $\partial T$ must separate some components of $X_{I+N}$ from the other components. Without loss of generality, we will henceforth assume that $\partial T \cap \partial X_n = \emptyset$ for all $n \in \N$. The technical part of the proof is then to show that $\partial T$ cannot separate double tori from the standard defining sequence. In fact, we will show that, if $T$ contains a component of $X_n$ for some $n$, then $T$ contains all of $X_n$.

\subsection{Hopf links and filling disks}\label{sec:hlfd}
Recall the following notation from Section \ref{sec:cantorcon}:
\begin{itemize}
	\item We have square solid tori $X_0^1$ and $X_0^2$ with square core curves $\gamma_0^1$ and $\gamma_0^2$, respectively. Allowing $\gamma_0^1$ and $\gamma_0^2$ to intersect at a corner yields a solid double torus $X_0 = X_0^1 \cup X_0^2$, with core curve $\gamma_0 = \gamma_0^1 \cup \gamma_0^2$.
	\item Under the sense-preserving similarities $\phi_j$ for $j=1,\ldots, m$, we have solid double tori $X_{1,j} = \phi_j(X_0)$ with core curves $\gamma_{1,j} = \phi_j(\gamma_0)$. Each $X_{1,j}$ is also the union of overlapping solid tori $X_{1,j}^i = \phi_j(X_0^i)$ for $i = 1,2$, with corresponding core squares $\gamma_{1,j}^i$. Here the linking between subsequent double tori is such that $\gamma_{1,j}^2$ and $\gamma_{1,j+1}^1$ form a Hopf link, modulo $m$.
	\item Under iteration of the similarities, we have double tori $X_{n,j}$ for $n\in \N$ and $1\leq j \leq m^n$ which are the union of solid tori $X_{n,j}^i$ for $i = 1,2$. Similarly we have core curves $\gamma_{n,j} = \gamma_{n,j}^1 \cup \gamma_{n,j}^2$. We assume the labeling of the $X_{n,j}$ is such that given $j$, $X_{n,j}$ and $X_{n,j+1}$ are always linked, where $j$ and $j+1$ are taken modulo $m/2$.
We also have $X_n = \bigcup_{j=1}^{m^n} X_{n,j}$ for all $n$.
\end{itemize}

To this we add some new objects. Note that every $\gamma_{n,j}^i$ is a geometric square, and hence an unknotted planar topological circle in $\R^3$. Hence there exist closed topological disks in $\R^3$ bounded by the $\gamma_{n,j}^i$. Such a disk is known as a \emph{filling disk}. We isolate the unique filling disk that is a filled geometric square living in the same plane as $\gamma_{n,j}^i$. We refer to this filling disk as the \emph{canonical filling disk} of $\gamma_{n,j}^i$, and denote it by $D_{n,j}^i$.

Now let $n \in \N$, and let $T \subset \interior X_0$ be a solid torus containing some component $X_{n,j}$ of $X_n$, and such that $\partial T \cap \partial X_k = \emptyset$ for all $k$. Since the tori we are examining may be arbitrarily small, we may assume that $T$ is contained in some $X_{N,J}$ where $0< N < n$. To examine whether $\partial T$ can separate $X_{n,j}$ from other components of $X_n$, we require the following lemmas.

\begin{lemma}\label{lem:somenotall}
	Suppose that $T$ contains some, but not all, components of $X_n$ contained in the same component $X_{n-1,k}$ of $X_{n-1}$. Then $T$ is contained in $X_{n-1,k}$.
\end{lemma}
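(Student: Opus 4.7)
The plan is to work in $S^3 = \R^3 \cup \{\infty\}$ and analyze which side of $\partial T$ the connected genus~$2$ surface $\partial X_{n-1,k}$ lies on. Since $T \subset X_{N,J}$ is bounded in $\R^3$ we have $\infty \in S^3 \setminus T$, and by the standing assumption $\partial T \cap \partial X_l = \emptyset$ for every $l$ we have $\partial T \cap \partial X_{n-1,k} = \emptyset$. Thus $\partial X_{n-1,k}$ sits in the open set $S^3 \setminus \partial T = \operatorname{int} T \sqcup (S^3 \setminus T)$, and by its connectedness it lies entirely in one of these two components, giving a clean dichotomy.

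In the first case, $\partial X_{n-1,k} \subset S^3 \setminus T$, the solid torus $T$ is a connected set disjoint from the separating surface $\partial X_{n-1,k}$, so $T$ lies in a single component of $S^3 \setminus \partial X_{n-1,k}$. Since $T$ contains the nonempty set $X_{n,j} \subset \operatorname{int} X_{n-1,k}$, that component must be $\operatorname{int} X_{n-1,k}$, and hence $T \subset X_{n-1,k}$ as required.

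The rest of the argument would handle the second case, $\partial X_{n-1,k} \subset \operatorname{int} T$, by pushing the same idea through $S^3 \setminus T$ and deriving a contradiction. The key fact is that $S^3 \setminus T$ is connected --- a general property of a closed solid torus in $S^3$ independent of how $T$ is embedded, visible from Alexander duality $\tilde{H}_0(S^3 \setminus T) \cong \tilde{H}^2(T) \cong \tilde{H}^2(S^1) = 0$. Being connected, disjoint from $\partial X_{n-1,k}$, and containing $\infty \in S^3 \setminus X_{n-1,k}$, the set $S^3 \setminus T$ must lie entirely in $S^3 \setminus X_{n-1,k}$, equivalently $X_{n-1,k} \subset T$. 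But then every component of $X_n$ inside $X_{n-1,k}$ lies in $T$, contradicting the hypothesis that not all such components lie in $T$.

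The main obstacle is really just recognizing the right setup: passing to $S^3$ and exploiting that both $T$ and $S^3 \setminus T$ are connected turns the hypothesis into a two-case analysis based on which side of $\partial T$ the genus~$2$ surface $\partial X_{n-1,k}$ falls on. It is worth remarking that this approach does \emph{not} need the specific Hopf-link structure of the $X_{n,\cdot}$ inside $X_{n-1,k}$, so neither Lemma~\ref{lem:Hopf} nor the figure-eight chain geometry need be invoked for this particular lemma; those tools will presumably enter in the subsequent lemmas that force $T$ to contain either all or none of the components at level $n$.
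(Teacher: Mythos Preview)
Your proof is correct. Both arguments are separation/connectedness arguments, but they are organized differently.

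The paper's proof is shorter and works entirely with $\partial T$: assuming $T \not\subset X_{n-1,k}$, one shows that $\partial T$ has a point in $\R^3 \setminus X_{n-1,k}$ (using that $T$ is bounded while $\R^3 \setminus X_{n-1,k}$ is unbounded and connected) and also a point in $\operatorname{int} X_{n-1,k}$ (because $\operatorname{int} X_{n-1,k}$ is connected and meets both $T$ and its complement, by the ``some but not all'' hypothesis). Connectedness of $\partial T$ then forces $\partial T \cap \partial X_{n-1,k} \neq \emptyset$, contradicting the standing assumption.

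Your route is dual: you fix attention on $\partial X_{n-1,k}$, use its connectedness to place it on one side of $\partial T$, and then use connectedness of $T$ (Case~1) or of $S^3 \setminus T$ (Case~2, via Alexander duality) to finish. This is slightly heavier machinery---the appeal to Alexander duality is correct but not needed, since the paper only requires that the $2$-torus $\partial T$ is connected---but it has the advantage of making the dichotomy completely explicit and of isolating exactly which topological facts about $T$ are being used. Your closing remark that the Hopf-link structure is irrelevant here is also accurate and matches the paper: this lemma is purely a separation statement, and the linking enters only in the subsequent lemmas.
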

\begin{proof}
	Suppose for the sake of contradiction that $T$ is not entirely contained in $X_{n-1,k}$. Then $T$ contains points outside of $X_{n-1,k}$, and hence some points of $\partial T$ lie outside of $X_{n-1,k}$. Since $T$ contains some, but not all, of $X_{n-1,k}\cap X_n$, some points of $\partial T$ must be contained in $X_{n-1,k}$. This is a contradiction, since then $\partial T \cap \partial X_{n-1,k} \neq \emptyset$.
\end{proof}

\begin{lemma}\label{lem:ComponentHopf}
	Let $X_{n,j-1}, X_{n,j}$, and $X_{n,j+1}$ be sequentially linked components of $X_n$, and suppose that $T$ contains $X_{n,j}$. Then $T$ contains $X_{n, j-1}$ or $X_{n,j+1}$.
\end{lemma}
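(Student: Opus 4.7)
The plan is to apply Lemma~\ref{lem:Hopf} to the figure-eight core curves $\gamma_{n,j-1}$, $\gamma_{n,j}$, $\gamma_{n,j+1}$ (playing the roles of $\gamma_1$, $\gamma_2$, $\gamma_3$), and then to upgrade containment of core curves to containment of the full double tori using the standing hypothesis $\partial T \cap \partial X_k = \emptyset$ for every $k$.

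First I would verify the three hypotheses of Lemma~\ref{lem:Hopf}. By construction, each $\gamma_{n,i}^1$ and $\gamma_{n,i}^2$ meet at a corner, so the intersection condition holds. The chain linking rule recalled at the start of Section~\ref{sec:hlfd} directly supplies $(\gamma_{n,j-1}^2, \gamma_{n,j}^1)$ and $(\gamma_{n,j}^2, \gamma_{n,j+1}^1)$ as Hopf links. For the unlink conditions I would use the canonical filling disks $D_{n,i}^k$ introduced in Section~\ref{sec:hlfd}: the geometric placement along $\gamma_0$ keeps non-adjacent square faces spatially separated, so for each of the pairs $(\gamma_{n,j-1}^2, \gamma_{n,j}^2)$ and $(\gamma_{n,j}^1, \gamma_{n,j+1}^1)$ the two canonical filling disks can be chosen disjoint, exhibiting the pair as an unlink.

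With the hypotheses verified, and since $\gamma_{n,j} \subset X_{n,j} \subset T$, Lemma~\ref{lem:Hopf} yields $\gamma_{n,j-1} \subset T$ or $\gamma_{n,j+1} \subset T$. Assume without loss of generality the latter. I then need to upgrade $\gamma_{n,j+1} \subset T$ to $X_{n,j+1} \subset T$. Since $\partial T$ is a connected surface disjoint from $\partial X_{n,j+1}$, either $\partial T \cap X_{n,j+1} = \emptyset$ or $\partial T \subset \interior X_{n,j+1}$. In the first sub-case, connectedness of $X_{n,j+1}$ forces it to lie entirely in $T$ or in $\R^3 \setminus T$; since $\gamma_{n,j+1} \subset X_{n,j+1}$ already lies in $T$, it pins $X_{n,j+1}$ inside $T$.

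The main obstacle is the second sub-case, $\partial T \subset \interior X_{n,j+1}$, which I would dispose of as follows. The complement $\R^3 \setminus X_{n,j+1}$ of the genus-$2$ handlebody is connected and unbounded, and is disjoint from $\partial T$; since $T$ is bounded, $\R^3 \setminus X_{n,j+1}$ must lie entirely in $\R^3 \setminus T$. But $X_{n,j}$ and $X_{n,j+1}$ are disjoint components of $X_n$, so $X_{n,j} \subset \R^3 \setminus X_{n,j+1} \subset \R^3 \setminus T$, contradicting the hypothesis $X_{n,j} \subset T$. Hence this sub-case cannot occur, and the conclusion $X_{n,j+1} \subset T$ (or symmetrically $X_{n,j-1} \subset T$) holds.
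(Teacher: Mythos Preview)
Your proof is correct and follows the same approach as the paper: apply Lemma~\ref{lem:Hopf} to the core curves $\gamma_{n,j-1},\gamma_{n,j},\gamma_{n,j+1}$ to obtain $\gamma_{n,j\pm 1}\subset T$, then use the standing assumption $\partial T\cap\partial X_k=\emptyset$ to upgrade this to $X_{n,j\pm 1}\subset T$. You simply supply more detail than the paper does, both in verifying the hypotheses of Lemma~\ref{lem:Hopf} and in carrying out the connectedness argument for the upgrade step.
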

\begin{proof}
	Consider the core curves of the given double tori, namely $\gamma_{n,j-1}$, $\gamma_{n,j}$, and $\gamma_{n,j+1}$. Since $T$ contains $\gamma_{n,j}$, we have by Lemma \ref{lem:Hopf} that $T$ contains $\gamma_{n,j-1}$ or $\gamma_{n,j+1}$. That $T$ contains the corresponding double torus follows from the standing assumption that $\partial T \cap \partial X_k = \emptyset$ for all $k$.
\end{proof}

Repeated application of Lemma \ref{lem:ComponentHopf} yields the following.

\begin{corollary}\label{lem:loop}
Suppose that $X_{n,j}$ is a component of $X_n$ contained in the component $X_{n-1,k}$ of $X_{n-1}$, and that $\gamma_{n-1,k}^i$ is the square segment of $\gamma_{n-1,k}$ along which $X_{n,j}$ is arranged. If $T$ contains $X_{n,j}$, then $T$ contains all components of $X_n$ that are arranged along $\gamma_{n-1,k}^i$.
\end{corollary}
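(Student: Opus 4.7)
The plan is to prove the corollary by iteratively applying Lemma \ref{lem:ComponentHopf} around the cyclic chain of $X_n$-components along $\gamma_{n-1,k}^i$. Enumerate these components as $Y_1, Y_2, \ldots, Y_M$ in cyclic order, where consecutive $Y_\ell, Y_{\ell+1}$ (indices modulo $M$) are sequentially linked in the sense of Lemma \ref{lem:Hopf}. The chain genuinely closes up as a cycle: at the crossing of the figure-eight core $\gamma_{n-1,k}$, the four-way linking pairwise Hopf-links the endpoint tori of the chain along $\gamma_{n-1,k}^i$, while the non-adjacent loops at this transition remain unlinked, so Lemma \ref{lem:ComponentHopf} applies uniformly at every triple $(Y_{\ell-1}, Y_\ell, Y_{\ell+1})$. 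Without loss of generality set $X_{n,j} = Y_1$.

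Write $A = \{\ell : Y_\ell \subset T\}$; the goal is $A = \{1, \ldots, M\}$. Lemma \ref{lem:ComponentHopf} applied at each $\ell \in A$ yields $\ell - 1 \in A$ or $\ell + 1 \in A$, so $A$ contains no isolated indices and decomposes into cyclic arcs, each of length at least two. Suppose for contradiction $A$ is a proper subset of $\{1, \ldots, M\}$, and choose a maximal arc $[a, b] \subset A$ (with $a < b$ by the previous remark), so that $Y_{a-1}, Y_{b+1} \not\subset T$. The topological argument inside the proof of Lemma \ref{lem:Hopf}, applied at each boundary of this arc, forces $T$ to be unknotted in $S^3$, with $\gamma_{Y_a}^1$ and $\gamma_{Y_b}^2$ both serving as core curves of $T$, and $\gamma_{Y_{a-1}}^2, \gamma_{Y_{b+1}}^1$ both core curves of $\overline{S^3 \setminus T}$. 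In particular, $\gamma_{Y_{a-1}}^2$ and $\gamma_{Y_{b+1}}^1$ are isotopic---hence homotopic---in $\overline{S^3 \setminus T}$, and therefore homotopic in the larger space $\overline{S^3 \setminus (Y_a \cup \cdots \cup Y_b)}$.

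The main obstacle, and the crux of the argument, is then to derive a contradiction by showing that $\gamma_{Y_{a-1}}^2$ and $\gamma_{Y_{b+1}}^1$ are \emph{not} homotopic in $\overline{S^3 \setminus (Y_a \cup \cdots \cup Y_b)}$, generalizing the final homotopy step in Lemma \ref{lem:Hopf} from a single figure-eight to a chain. The plan is homological: since $Y_a \cup \cdots \cup Y_b$ is a disjoint union of $b-a+1$ figure-eights, Alexander duality identifies $H_1(S^3 \setminus (Y_a \cup \cdots \cup Y_b)) \cong \Z^{2(b-a+1)}$, with a basis consisting of the meridians $\mu_{\gamma_{Y_i}^1}, \mu_{\gamma_{Y_i}^2}$ for $a \leq i \leq b$. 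The curve $\gamma_{Y_{a-1}}^2$ has linking number $1$ with $\gamma_{Y_a}^1$ and $0$ with every other loop of the chain, so its homology class is $\mu_{\gamma_{Y_a}^1}$; likewise $[\gamma_{Y_{b+1}}^1] = \mu_{\gamma_{Y_b}^2}$. Since $a \neq b$ these are distinct basis elements, so the curves are non-homologous and in particular not homotopic in $\overline{S^3 \setminus (Y_a \cup \cdots \cup Y_b)}$. This contradicts the homotopy from the previous paragraph and forces $A = \{1, \ldots, M\}$, completing the proof.
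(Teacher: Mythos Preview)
Your proof is correct, and it supplies substantially more than the paper's own argument, which is the single line ``Repeated application of Lemma~\ref{lem:ComponentHopf} yields the following.'' Read literally, that sentence is not enough: the statement of Lemma~\ref{lem:ComponentHopf} only guarantees that each index in $A=\{\ell : Y_\ell\subset T\}$ has at least one cyclic neighbor in $A$, and a two-element arc such as $A=\{1,2\}$ already satisfies that condition without being all of $\{1,\ldots,M\}$. What the paper presumably intends is that the \emph{proof technique} of Lemma~\ref{lem:Hopf} extends from a single figure-eight to an arbitrary sub-chain, and that is precisely what you carry out.

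Your maximal-arc argument together with the Alexander-duality computation is a natural and clean way to make this extension rigorous: it generalizes the last step of Lemma~\ref{lem:Hopf} (the case $b=a$, where the obstruction is non-homotopy in $\overline{S^3\setminus\gamma_2}$) to a chain $Y_a,\ldots,Y_b$ of arbitrary length, replacing the ad hoc homotopy obstruction by the linking-number basis of $H_1\bigl(S^3\setminus\bigcup_{a\le i\le b}Y_i\bigr)\cong\Z^{2(b-a+1)}$. Your care in checking that the cyclic closure at the four-way linking still satisfies the hypotheses of Lemma~\ref{lem:Hopf} is also a detail the paper leaves implicit. So the two approaches are the same in spirit, but yours is the one that actually constitutes a proof.
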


One way of regarding Corollary \ref{lem:loop} is to say that if $T$ contains a double torus from $X_n$, then $T$ contains all the linked double tori on the same loop. We next need the following topological lemma.

\begin{lemma}\label{lem:hopffilldisk}
Let $\Gamma_1,\Gamma_2$ be unknotted loops in $S^3$ that are linked via a Hopf link. Suppose that $\Gamma_1\cup \Gamma_2$ is contained in an embedded topological ball $B \subset S^3$. Let $D$ be a filling disk of $\Gamma_1$. Then $D\cap \Gamma_2 \neq \emptyset$ and, moreover, $\partial B \cap D$ cannot separate $\Gamma_1$ and $\Gamma_2 \cap D$ in $D$.
\end{lemma}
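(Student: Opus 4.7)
For the first assertion, the plan is to appeal directly to linking number. Since $\Gamma_1$ and $\Gamma_2$ form a Hopf link, $\text{lk}(\Gamma_1,\Gamma_2) = \pm 1$, and the disk $D$ serves as a Seifert surface for $\Gamma_1$, so the linking number equals the algebraic intersection count $D \cdot \Gamma_2$. If $D \cap \Gamma_2$ were empty, this count would be zero, yielding the contradiction $\pm 1 = 0$; hence $D \cap \Gamma_2 \neq \emptyset$.

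For the second assertion, the strategy is to argue by contradiction: assuming $\partial B \cap D$ separates $\Gamma_1$ from $\Gamma_2 \cap D$ in $D$, I will construct a new (singular) filling disk of $\Gamma_1$ that is entirely disjoint from $\Gamma_2$, contradicting what was just established. First, after a small perturbation (together with a polyhedral approximation of the topological ball $B$ if needed), arrange that $\Gamma_1,\Gamma_2 \subset \interior B$ and that $\partial B$ meets $D$ transversally. Then $\partial B \cap D$ is a finite disjoint union of simple closed curves in $\interior D$ (no arcs, since $\partial D = \Gamma_1$ is disjoint from $\partial B$), and $\partial B \cap \Gamma_2 = \emptyset$.

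Let $R_0$ be the component of $D \setminus (\partial B \cap D)$ whose closure contains $\Gamma_1$. The separation hypothesis forces $R_0 \cap \Gamma_2 = \emptyset$. The closure $\overline{R_0}$ is a planar surface bounded by $\Gamma_1$ together with the outermost family $\alpha_1,\ldots,\alpha_l$ of curves from $\partial B \cap D$. Because $\partial B$ is a topological $2$-sphere, the Schoenflies theorem supplies, for each $i$, a disk $\Delta_i' \subset \partial B$ with $\partial \Delta_i' = \alpha_i$; each such $\Delta_i'$ is disjoint from $\Gamma_2$ since $\Gamma_2 \cap \partial B = \emptyset$. Capping off each inner boundary circle $\alpha_i$ of $\overline{R_0}$ by $\Delta_i'$ yields an abstract $2$-disk (Euler characteristic $(1-l) + l = 1$, one boundary component $\Gamma_1$) which maps continuously into $S^3$. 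The resulting map $\tilde D$ is a singular filling disk for $\Gamma_1$ whose image lies in $\overline{R_0} \cup \bigcup_i \Delta_i'$ and is therefore disjoint from $\Gamma_2$. Hence $\text{lk}(\Gamma_1,\Gamma_2) = 0$, contradicting the Hopf link hypothesis.

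The main obstacle I anticipate is the general-position setup, since $B$ is only assumed to be topologically embedded and $\partial B$ could in principle be wild (e.g., an Alexander horned sphere), in which case $\partial B \cap D$ need not be a $1$-complex. The plan to sidestep this is to first approximate $B$ by a polyhedral ball $B'$ close enough to $B$ that the separation hypothesis persists with $B'$ in place of $B$; the argument then proceeds in the PL category where transversality and the finite structure of $\partial B' \cap D$ are standard.
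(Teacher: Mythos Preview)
Your argument is correct and takes a genuinely different route from the paper. For the first claim, you use the Seifert-surface interpretation of linking number, whereas the paper argues that $S^3\setminus(\Gamma_1\cup D)$ is contractible while $\Gamma_2$ is essential in $S^3\setminus\Gamma_1$. For the second claim, the paper looks at components $Y$ of $D\setminus B$ (the pieces of $D$ lying \emph{outside} $B$), observes that any loop in $Y$ is null-homologous in $S^3\setminus\Gamma_2$ because $H_1(S^3\setminus B)=0$ by Alexander duality on the ball, and then pulls this back to $\overline{\D}$ to conclude that no such $Y$ can separate $S^1$ from $g^{-1}(\Gamma_2)$. You instead work with the $\Gamma_1$-component $R_0$ of $D\cap\interior B$, cap its inner boundary circles with Schoenflies disks in $\partial B$, and produce a singular disk for $\Gamma_1$ missing $\Gamma_2$, contradicting $\operatorname{lk}(\Gamma_1,\Gamma_2)=\pm1$. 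Your approach is more constructive and keeps everything in the language of intersection numbers; the paper's is more homotopy-theoretic and sidesteps transversality entirely. Regarding your acknowledged wild-ball issue: rather than approximating $B$ from outside, choose a PL ball $B'$ with $\Gamma_1\cup\Gamma_2\subset\interior B'\subset B'\subset\interior B$ (this exists since $\interior B$, as an open subset of $S^3$, inherits a PL structure which by Moise's theorem is PL-homeomorphic to $\R^3$, so compact sets sit inside PL balls). The $\Gamma_1$-component of $D\setminus\partial B'$ is then contained in $R_0$ and hence still misses $\Gamma_2$, so the separation hypothesis persists and your capping argument goes through verbatim.
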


\begin{proof}
Suppose that $g: S^1 \to \Gamma_1$ has an extension that, by a slight abuse of notation, we still call $g : \overline{\D} \to \Gamma_1 \cup D$. As $S^3 \setminus (\Gamma_1 \cup D)$ is contractible and as $\Gamma_2$ is not contractible in $S^3 \setminus \Gamma_1$, it follows that $\Gamma_2$ has to intersect $D$.

Towards the second claim, suppose that $Y$ is a component of $D \setminus B$. If there is no such component then evidently the claim follows. Clearly $Y$ is contractible in $S^3 \setminus \Gamma_2$, from which it follows that $g^{-1}( Y)$ is contractible in $\overline{\D} \setminus g^{-1}(\Gamma_2)$. As no component of $g^{-1} ( D\setminus B)$ can separate $S^1$ from $g^{-1}(\Gamma_2)$, we conclude that $\partial B \cap D$ cannot separate $\Gamma_1$ and $\Gamma_2 \cap D$ in $D$.
\end{proof}

We now apply Lemma \ref{lem:hopffilldisk} to our situation to show that neighboring components of $X_n$ that are contained in $T$ can be joined by a path in $T$ that stays close to the components.

\begin{lemma}
\label{lem:filldisk}
Suppose that the torus $T$ contains components $X_{n,j}$ and $X_{n,j+1}$ of $X_n$ which are linked via the squares $\gamma_{n,j}^2$ and $\gamma_{n,j+1}^1$. Let $B$ be a topological ball containing $\gamma_{n,j}^2$ and $\gamma_{n,j+1}^1$ and not intersecting any other component of $X_n$. Then there exists a path $p$ joining $X_{n,j}$ and $X_{n,j+1}$ inside $T\cap B$.
\end{lemma}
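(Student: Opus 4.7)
The plan is to adapt the proof of Lemma~\ref{lem:hopffilldisk}. Let $D := D_{n,j}^2$ be the canonical filling disk of $\gamma_{n,j}^2$. First, I will apply Lemma~\ref{lem:hopffilldisk} with the given ball $B$ to obtain a point $q \in D \cap \gamma_{n,j+1}^1$; since $\gamma_{n,j+1}^1 \subset B$, we have $q \in B$. Lemma~\ref{lem:hopffilldisk} also asserts that $\partial B \cap D$ does not separate $\gamma_{n,j}^2 = \partial D$ from $q$ in $D$, so there is already a path $\pi \subset D \cap B$ from $\gamma_{n,j}^2$ to $q$. It will suffice to find such a path inside $D \cap T \cap B$, since any such path lies in $T \cap B$ and joins $X_{n,j}$ to $X_{n,j+1}$.

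Next, the standing assumption $\partial T \cap \partial X_k = \emptyset$ places $\gamma_{n,j}^2$ and $\gamma_{n,j+1}^1$ in $\interior T$, so $\partial D \cap \partial T = \emptyset$. After perturbing $\partial T$ to be in general position with $D$, the intersection $\partial T \cap D$ is a disjoint union of simple closed curves lying in $\interior D$, each bounding a sub-disk of $D$. The complement $D \setminus (\partial T \cap D)$ decomposes into connected regions, alternating between those contained in $\interior T$ and those contained in $\R^3 \setminus T$; the region containing $\partial D$ is in $\interior T$, and the region containing $q$ is also in $\interior T$ (since $q \in \interior T$).

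The main step is to show that $\gamma_{n,j}^2$ and $q$ lie in the same path-component of $D \cap T \cap B$. Equivalently, no component of $(D \cap B) \setminus T$ separates $\partial D$ from $q$ in $D \cap B$. I will mirror the proof of Lemma~\ref{lem:hopffilldisk}: for each component $Y$ of $(D \cap B) \setminus T$, aim to show $Y$ is contractible in $S^3 \setminus \gamma_{n,j+1}^1$; then the parameterization argument used for Lemma~\ref{lem:hopffilldisk} (writing $D$ as the image of $\overline{\D}$) will yield that $Y$ cannot separate. The contractibility amounts to the claim that the simple closed curves in $\partial T \cap D$ bounding $Y$ have linking number zero with $\gamma_{n,j+1}^1$, which I plan to verify using the hypothesis that $B$ avoids the other components of $X_n$ together with the nested structure of $\partial T \cap D$.

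The hard part will be rigorously establishing this contractibility, since $S^3 \setminus T$ is not in general simply connected and so the direct analog of the ball-contraction used in Lemma~\ref{lem:hopffilldisk} fails. The key input should be the interplay between the Hopf linking of $\gamma_{n,j}^2$ with $\gamma_{n,j+1}^1$, the position of $q$ in a $T$-region of $D$, and the isolation of $B$ from the other components of $X_n$. Together these should preclude any curve of $\partial T \cap D$ from enclosing $q$ in $D$ in a way that links $\gamma_{n,j+1}^1$, giving the needed contractibility and hence the path $p$.
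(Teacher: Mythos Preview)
Your central claim---that $\gamma_{n,j}^2$ and $q$ lie in the same path-component of $D\cap T\cap B$---can fail, and with it the whole strategy of staying inside the canonical disk $D$. Nothing prevents $\partial T$ from meeting $D$ in a pair of nested circles $c_1\supset c_2$ both encircling $q$, with the annulus between them lying in $S^3\setminus T$; then every path in $D$ from $\partial D$ to $q$ leaves $T$. Moreover, the contractibility you hope to establish is exactly what breaks here: the annular component $Y$ between $c_1$ and $c_2$ retracts onto a circle in $D$ encircling $q$, and that circle bounds (in $D$) a subdisk meeting $\gamma_{n,j+1}^1$ exactly once, so its linking number with $\gamma_{n,j+1}^1$ is $\pm 1$. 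Hence $Y$ is \emph{not} contractible in $S^3\setminus\gamma_{n,j+1}^1$. The hypothesis that $B$ avoids the other components of $X_n$ does not help, since the obstruction comes from $\gamma_{n,j+1}^1$ itself, which is inside $B$.

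The paper's proof takes a genuinely different route once the canonical disk fails. If some circle $a\subset D\cap(S^3\setminus T)$ separates $\gamma_{n,j}^2$ from $q$ in $D$, then $a$ links $\gamma_{n,j+1}^1\subset T$, forcing $\gamma_{n,j+1}^1$ to be a core curve of $T$; hence $T$ is unknotted and $a$ is a core of $\overline{S^3\setminus T}$. Since $a$ does \emph{not} link $\gamma_{n,j}^2$, the curve $\gamma_{n,j}^2$ is null-homotopic in $T$ and therefore bounds a \emph{new} filling disk $D'\subset T$. Lemma~\ref{lem:hopffilldisk}, applied to $D'$ and $B$, then gives a path in $D'\cap B\subset T\cap B$. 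The missing idea in your proposal is precisely this switch of filling disk: rather than arguing that $\partial T$ cannot separate in $D$, one shows that when it does, $\gamma_{n,j}^2$ bounds a disk entirely inside $T$, where only $\partial B$ remains to be handled.
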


\begin{proof}
If the canonical filling disk $D_{n,j}^2$ contains a continuum connecting $\gamma_{n,j}^2$ and $\gamma_{n,j+1}^1$ then we are done. Otherwise, there must be a topological circle $a \subset D_{n,j}^2 \cap (S^3 \setminus T)$ separating $\gamma_{n,j}^2$ from $\gamma_{n,j+1}^1 \cap D_{n,j}^2$ in $D_{n,j}^2$.

As $\gamma_{n,j+1}^1 \subset T$ is linked with $a \subset S^3 \setminus T$, it follows that $\gamma_{n,j+1}^1$ is a core curve for $T$. By Lemma \ref{lem:TorusKnot}, this means that $T$ is an unknotted torus in $S^3$ and thus the closure of its complement $\overline{ S^3   \setminus T}$ is also an unknotted torus in $S^3$. As $a\subset \overline{S^3 \setminus T}$ is linked with $\gamma_{n,j+1}^1 \subset T$, we have that $a$ is a core curve for $\overline{ S^3 \setminus T}$. Then as $a$ is not linked with $\gamma_{n,j}^2$, we conclude that $\gamma_{n,j}^2$ is contractible in $T$.

Any homotopy relative to $\overline{S^3 \setminus T}$ which collapses $\gamma_{n,j}^2$ to a point of $\gamma_{n,j+1}^1$ sweeps out a filling disk of $\gamma_{n,j}^2$ contained in $T$. Applying Lemma \ref{lem:hopffilldisk} to this filling disk and the ball $B$ yields a path $p$ joining $\gamma_{n,j}^2$ and $\gamma_{n,j+1}^1$ in $B\cap T$.
\end{proof}

We would like the path from the previous lemma to not stray too far from a canonical filling disk. Fortunately, we can choose paths arbitrarily close to canonical filling disks. Recall that $d$ denotes the Euclidean distance on $\R^n$ and that if $x\in \R^n$ and $\Omega \subset \R^n$ then 
\[d(x,\Omega) = \inf_{y\in \Omega} d(x,y) .\]

\begin{corollary}\label{lem:distance}
Let all the assumptions from Lemma \ref{lem:filldisk} be given and let $\epsilon >0$. Then there exists a path $p_\epsilon$ joining $X_{n,j}$ and $X_{n,j+1}$ inside $T$ satisfying
	\[\sup\{d(x, E_{n,j}) \,|\, x\in p_\epsilon\} < \epsilon, \]
where $E_{n,j} = D_{n,j}^2 \cup D_{n,j+1}^1$ is the union of the canonical filling disks of $\gamma_{n,j}^2$ and $\gamma_{n,j+1}^1$.
\end{corollary}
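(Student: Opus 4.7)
The plan is to apply Lemma \ref{lem:filldisk} with a topological ball $B = B_\epsilon$ contained in a small metric neighborhood of $E_{n,j}$, so that the path produced by that lemma is automatically forced to stay $\epsilon$-close to $E_{n,j}$.

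First, I would verify that $E_{n,j}$ is disjoint from every component $X_{n,k}$ with $k \neq j, j+1$. Each canonical filling disk $D_{n,j}^i$ is a flat square lying in the plane of $\gamma_{n,j}^i$, and by the Antoine-style linking structure together with the smallness of the level-$n$ components, the only component of $X_n$ other than $X_{n,j}$ itself meeting $D_{n,j}^2$ is the linked neighbor $X_{n,j+1}^1$, and similarly for $D_{n,j+1}^1$; all remaining components lie at a distance from the two supporting planes bounded below by a fixed multiple of the level-$n$ spacing. By compactness there then exists $\delta_0 > 0$ such that the closed $\delta_0$-neighborhood $N_{\delta_0}(E_{n,j})$ remains disjoint from every $X_{n,k}$ with $k \neq j, j+1$.

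Next, fix $\delta$ with $0 < \delta < \min(\epsilon, \delta_0)$ and take $B_\epsilon$ to be a PL regular neighborhood of $E_{n,j}$ contained in $N_\delta(E_{n,j})$. Observe that $E_{n,j}$ is the union of two filled $2$-disks meeting in their interiors along an arc (namely the line segment of intersection of the two supporting planes), and is therefore collapsible: first collapse one disk onto the shared arc, then collapse the remaining disk to a point. By the standard PL regular neighborhood theorem, the regular neighborhood of a collapsible compact polyhedron in a PL $3$-manifold is PL-homeomorphic to a $3$-ball, so $B_\epsilon$ is a topological ball. It contains $\gamma_{n,j}^2 \cup \gamma_{n,j+1}^1 \subset E_{n,j}$ and, by the choice of $\delta$, is disjoint from every $X_{n,k}$ with $k \neq j, j+1$. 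Applying Lemma \ref{lem:filldisk} with $B = B_\epsilon$ yields a path $p_\epsilon \subset T \cap B_\epsilon$ joining $X_{n,j}$ and $X_{n,j+1}$, and since $p_\epsilon \subset B_\epsilon \subset N_\delta(E_{n,j})$ we conclude $d(x, E_{n,j}) \leq \delta < \epsilon$ for every $x \in p_\epsilon$.

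The main obstacle is the geometric disjointness claim in the first step: the canonical (flat) filling disks live in planes that extend well beyond the tori they bound, so one must use the specific spacings, rotation angles, and contraction factor $k$ fixed in Section \ref{sec:cantorcon} to confirm that the level-$n$ components cluster tightly along the previous-level core curves and cannot accidentally intersect $D_{n,j}^2$ or $D_{n,j+1}^1$. The other ingredient worth checking carefully is the collapsibility claim and the invocation of the PL regular neighborhood theorem, although both are standard once the nature of the intersection $D_{n,j}^2 \cap D_{n,j+1}^1$ is understood.
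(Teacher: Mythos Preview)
Your approach is essentially identical to the paper's: choose a ball neighborhood of $E_{n,j}$ small enough to avoid the other components of $X_n$, then apply Lemma~\ref{lem:filldisk}. The paper simply takes the metric $\epsilon/2$-neighborhood of $E_{n,j}$ and asserts in one line that it is an embedded topological ball, whereas you go through PL regular neighborhoods and collapsibility; your version is more careful, but the idea is the same. One small inaccuracy: because $\gamma_{n,j}^2$ and $\gamma_{n,j+1}^1$ form a Hopf link, the intersection arc $D_{n,j}^2\cap D_{n,j+1}^1$ has its endpoints on $\partial D_{n,j}^2$ and $\partial D_{n,j+1}^1$, not in the interiors of both disks; this does not affect your collapsibility argument.
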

\begin{proof}
Observe that an $\epsilon/2$-neighborhood of $E_{n,j}$ yields an embedded topological ball $B$ satisfying
	\[\sup\{d(x, E_{n,j}) \,|\, x\in B\} < \epsilon. \]
Applying Lemma \ref{lem:filldisk} to $B$ yields the path $p_\epsilon$.
\end{proof}

Considering the union of these paths with the tori in a loop of double tori now yields a genuine loop in $T$ that we can work with.

\begin{lemma}\label{lem:circle}
	Suppose that $T$ contains all components from $X_n$ that are arranged along the same square $\gamma_{n-1,k}^i$, say $\{ X_{n,j} : j \in J \}$ where $J\subset \{1,\ldots, m^n \}$ and the cardinality of $J$ is $m/2$. Then $T$ contains a topological circle $C$ that is homotopic to $\gamma_{n-1,k}^i$ via a homotopy whose image is inside the double torus $X_{n-1,k}$.
\end{lemma}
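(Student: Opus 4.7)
My plan is to construct $C$ explicitly by concatenating the connecting paths produced by Corollary \ref{lem:distance} with paths through each of the $m/2$ double tori in the chain, perturb slightly to obtain an embedded topological circle in $T$, and then verify that the homotopy class of $C$ in $X_{n-1,k}$ coincides with that of $\gamma_{n-1,k}^i$. The main obstacle will be the last step: computing the winding number of $C$ about $\gamma_{n-1,k}^i$.

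First I would cyclically relabel the $m/2$ components along $\gamma_{n-1,k}^i$ as $X_{n,j_1},\ldots,X_{n,j_{m/2}}$, arranged so that $X_{n,j_\ell}$ and $X_{n,j_{\ell+1}}$ are Hopf-linked via the squares $\gamma_{n,j_\ell}^2$ and $\gamma_{n,j_{\ell+1}}^1$ (indices mod $m/2$). Fix a small $\epsilon > 0$, to be specified. For each $\ell$, Corollary \ref{lem:distance} produces a path $p_\ell \subset T$ with endpoints $a_\ell \in X_{n,j_\ell}$ and $b_\ell \in X_{n,j_{\ell+1}}$, contained in the $\epsilon$-neighborhood of $E_{n,j_\ell} = D_{n,j_\ell}^2 \cup D_{n,j_{\ell+1}}^1$. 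Since each $X_{n,j_\ell}$ is path-connected, I pick a path $q_\ell \subset X_{n,j_\ell}$ joining $b_{\ell-1}$ to $a_\ell$. Concatenating $q_1 \cdot p_1 \cdot q_2 \cdot p_2 \cdots q_{m/2} \cdot p_{m/2}$ produces a closed loop in $T$, and a standard general-position perturbation in $\R^3$ turns it into an embedded topological circle $C \subset T$.

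Next I would argue that $C$ lies in a solid torus $T^*$ about $\gamma_{n-1,k}^i$ contained in the constituent torus $X_{n-1,k}^i \subset X_{n-1,k}$. The scaling constraint \eqref{eq:kbound} ensures that each double torus $X_{n,j_\ell}$, together with its associated canonical filling disks $D_{n,j_\ell}^2$ and $D_{n,j_{\ell+1}}^1$, lies in a thin tube about $\gamma_{n-1,k}^i$ strictly inside $X_{n-1,k}^i$; a sufficiently small choice of $\epsilon$ then forces each $p_\ell$, and hence all of $C$, into such a tube $T^*$. Because $\pi_1(T^*) \cong \Z$ is generated by $[\gamma_{n-1,k}^i]$, there is an integer $w$ with $[C] = w\,[\gamma_{n-1,k}^i]$ in $\pi_1(T^*)$, and a homotopy realizing this equality automatically lies inside $T^* \subset X_{n-1,k}$.

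The main obstacle is to show $w = \pm 1$. I would compute $w$ as the signed intersection number of $C$ with a meridional disk $\Delta$ for $T^*$, that is, an embedded disk transverse to $\gamma_{n-1,k}^i$ meeting it in a single point. By construction the anchoring of $X_{n,j_1},\ldots,X_{n,j_{m/2}}$ along $\gamma_{n-1,k}^i$ is monotonic around the loop, so the chain $\bigcup_\ell X_{n,j_\ell}$ meets $\Delta$ in a single connected piece with consistent orientation; choosing $\Delta$ generically away from the $E_{n,j_\ell}$ and requiring $\epsilon$ small enough that each $p_\ell$ stays disjoint from $\Delta$, one verifies that exactly one of the paths $q_\ell$ traverses $\Delta$, and does so once. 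This gives $|w|=1$, and after reversing orientation of $C$ if necessary, the resulting homotopy in $T^* \subset X_{n-1,k}$ takes $C$ to $\gamma_{n-1,k}^i$, completing the proof.
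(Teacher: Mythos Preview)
Your approach is essentially the paper's strategy, and it is correct in outline. The one step that needs more care is the winding-number computation. Your claim that a meridional disk $\Delta$ can be chosen to miss every bridge path $p_\ell$ and meet exactly one $q_\ell$ in a single crossing is not justified as stated: the sets $E_{n,j_\ell}$ (and hence their $\epsilon$-neighborhoods, which contain the $p_\ell$) chain together to cover the full longitudinal extent of $T^*$, so a meridional disk cannot simply avoid them; and your $q_\ell$ is an arbitrary path in a genus-$2$ handlebody, so nothing forces it to meet $\Delta$ geometrically once. What actually makes the argument go through is that each $X_{n,j_\ell}$ and each $\epsilon$-neighborhood of $E_{n,j_\ell}$ sits inside a ball in $T^*$, so any loop supported in one of these pieces is nullhomotopic in $T^*$. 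Hence $[C]\in\pi_1(T^*)$ is independent of the particular $p_\ell$ and $q_\ell$, and for a straight choice (e.g.\ along core segments) the algebraic intersection with $\Delta$ is visibly $\pm 1$. Once you add this observation your proof is complete.

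The paper sidesteps this bookkeeping by a sharper choice of the auxiliary solid torus. Rather than a generic tube $T^*$, it sets $\epsilon=\operatorname{diam}(X_{n,j})/1000$ and takes $Y=\bigcup_{j\in J}F_{n,j}$, where $F_{n,j}$ is the $\epsilon$-neighborhood of $E_{n,j}$. Each $F_{n,j}$ is a ball, consecutive ones overlap, and the cyclic union is a solid torus in $X_{n-1,k}$ with core $\gamma_{n-1,k}^i$. Because the connecting pieces of $C$ are taken along the core curves $\gamma_{n,j}$ (rather than arbitrary $q_\ell$), one has $C\subset Y$ automatically; since $C$ passes through the balls $F_{n,j}$ once each in cyclic order, the deformation retraction of $Y$ onto its core sends $C$ to a curve homotopic to $\gamma_{n-1,k}^i$ with no intersection-number argument required. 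Your approach trades this structural shortcut for a direct computation; both reach the same conclusion.
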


\begin{proof}
	Let $\epsilon = \text{diam}(X_{n,j})/1000$. From Corollary \ref{lem:distance}, for each $j\in J$ and every linked pair of squares $(\gamma_{n,j}^2, \gamma_{n,j+1}^1)$ contained in $T$, there exists a path $p_j \subset T$ connecting $\gamma_{n,j}^2$ and $\gamma_{n,j+1}^1$ which cannot intersect any other component of $X_n$, and satisfying 
	 \[\sup\{d(x, E_{n,j}) \,|\, x\in p_j\} < \epsilon.\] Taking the union of these paths $p_j$ together with certain segments of the cores $\gamma_{n,j}$ yields a topological circle $C$.
	
	Now take an $\epsilon$-neighborhood $F_{n,j}$ of each $E_{n,j}$, and let $Y = \bigcup_{j\in J} F_{n,j}$. Observe, by construction of the standard defining sequence, that $Y$ is then a solid torus contained in $X_{n-1,k}$, with core curve $\gamma_{n-1,k}^i$. Hence $Y$ deformation retracts onto $\gamma_{n-1,k}^i$. Since $C\subset Y$ is by construction homotopically nontrivial in $Y$, it follows that $C$ is homotopic to $\gamma_{n-1,k}^i$ via a homotopy whose image is inside $X_{n-1,k}$.

\end{proof}

\subsection{Separating double tori}\label{sec:septori}

Finally we may examine whether $\partial T$ can separate components of $X_n$ from each other.
\begin{lemma}\label{lem:continuity}
	If $T$ contains $X_{n,j}$, a component of $X_n$, then $T$ contains $X_{n-1,k}\cap X_n$, where $X_{n-1,k}$ is the component of $X_{n-1}$ containing $X_{n,j}$.
\end{lemma}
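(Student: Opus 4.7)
The plan is to show that $T$ must contain every component of $X_n$ inside $X_{n-1,k}$. First, by Corollary \ref{lem:loop}, if $T$ contains $X_{n,j}$ then $T$ contains all $m/2$ components of $X_n$ arranged along one of the two core squares of $\gamma_{n-1,k}$; after relabelling, I call these the loop-1 components arranged along $\gamma_{n-1,k}^1$, and the remaining components of $X_n$ inside $X_{n-1,k}$ the loop-2 components arranged along $\gamma_{n-1,k}^2$. Suppose for contradiction that some loop-2 component $X_{n,j'}$ is not contained in $T$. The contrapositive of Corollary \ref{lem:loop} then says $T$ contains no loop-2 component whatsoever, and Lemma \ref{lem:somenotall} gives $T\subset X_{n-1,k}$.

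The next step is to feed the circle $C$ produced by Lemma \ref{lem:circle} (applied to loop-1) into a $\pi_1$ argument. Here $C\subset T$ is homotopic to $\gamma_{n-1,k}^1$ inside $X_{n-1,k}$, and $X_{n-1,k}$ is a genus $2$ handlebody with $\pi_1(X_{n-1,k})=F(a,b)$ on generators $a=[\gamma_{n-1,k}^1]$ and $b=[\gamma_{n-1,k}^2]$. If $w\in F(a,b)$ represents a core curve of $T$ then $[C]=w^m=a$ for some integer $m$, and since $a$ is a basis element of a free group it is primitive: the equation $w^m=a$ forces $m=\pm 1$ and $w=a^{\pm 1}$. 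Thus the inclusion $T\hookrightarrow X_{n-1,k}$ induces an injection $\pi_1(T)=\mathbb{Z}\hookrightarrow F(a,b)$ with image $\langle a\rangle$.

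To close off the argument I will exploit the four-way linking at the central point of $X_{n-1,k}$ from Section \ref{sec:4way}. By self-similarity, exactly four components of $X_n$ meet this point: two loop-1 endpoints and two loop-2 endpoints, with four distinguished core squares that are pairwise Hopf-linked. I pick a loop-1 endpoint $X_{n,\alpha}\subset T$ and a loop-2 endpoint $X_{n,\beta}$, together with their distinguished squares $\eta_\alpha\subset X_{n,\alpha}$ and $\eta_\beta\subset X_{n,\beta}$. Since $X_{n,\beta}$ is connected, not contained in $T$, and disjoint from $\partial T$, we have $X_{n,\beta}\subset \R^3\setminus T$, so $\eta_\beta\subset \R^3\setminus T$. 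Next, by choosing a ball $B$ around the four-way linking point small enough that $X_{n-1,k}\cap B$ is a thickening of the $4$-valent vertex of $\gamma_{n-1,k}$ (and hence contractible) but large enough to contain $\eta_\alpha$, I conclude that $\eta_\alpha$ is null-homotopic in $X_{n-1,k}$. By $\pi_1$-injectivity, $\eta_\alpha$ bounds a disk $D\subset T$; but then $D\cap\eta_\beta=\emptyset$, giving $\mathrm{lk}(\eta_\alpha,\eta_\beta)=0$ and contradicting the Hopf link.

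The main obstacle is the $\pi_1$-injectivity step, where one really needs that $C$ is homotopic in $X_{n-1,k}$ to the specific primitive element $a$ rather than to some more complicated word; this is what rules out $T$ having a core of class $a^2$, $aba^{-1}$, or similar, and it is the place where Lemma \ref{lem:circle} earns its keep. A secondary subtlety is the choice of $B$: one must argue that near the self-intersection of $\gamma_{n-1,k}$ the set $X_{n-1,k}$ really does look like a thickened $4$-valent vertex, so that small squares at that vertex are genuinely null-homotopic in $X_{n-1,k}$. With these two ingredients in place the linking-number contradiction closes the argument.
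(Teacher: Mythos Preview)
Your proof is correct and takes a genuinely different route from the paper's argument, though both share the same opening (Corollary~\ref{lem:loop}, Lemma~\ref{lem:somenotall}, and Lemma~\ref{lem:circle} to get $T\subset X_{n-1,k}$ containing a circle $C$ homotopic to one core square). From there the arguments diverge. The paper looks \emph{outside} $X_{n-1,k}$: since $T\subset X_{n-1,k}$, the neighbouring level-$(n-1)$ square $\gamma_{n-1,l}^{3-i}$ Hopf-linked with $C$ lies in $S^3\setminus T$, as does a level-$n$ square $\gamma_{n,t}^u$ from the four-way link; an argument in the style of Lemma~\ref{lem:Hopf} then forces both to be core curves of $\overline{S^3\setminus T}$, hence homotopic there, contradicting the fact that squares from different levels are not homotopic in $S^3\setminus(X_n\cap T)$. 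Your argument stays entirely \emph{inside} $X_{n-1,k}$: you use that $\pi_1(X_{n-1,k})$ is free on $a,b$, that $[C]$ is (conjugate to) the primitive element $a$, and that primitive elements in free groups have no proper roots, to deduce that $\pi_1(T)\to\pi_1(X_{n-1,k})$ is injective; then a small square $\eta_\alpha$ at the four-way link is null-homotopic in $X_{n-1,k}$, hence in $T$, contradicting its Hopf link with $\eta_\beta\subset\R^3\setminus T$.

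Two minor points of phrasing: the image of $\pi_1(T)$ is only a \emph{conjugate} of $\langle a\rangle$ since Lemma~\ref{lem:circle} gives a free homotopy, but this does not affect injectivity; and ``bounds a disk $D\subset T$'' should be read as a singular disk (null-homotopy), which is all the linking-number argument requires. Your approach has the advantage of being self-contained within $X_{n-1,k}$ and avoiding the somewhat delicate cross-level non-homotopy claim the paper invokes; the paper's approach has the advantage of reusing the core-curve machinery of Lemma~\ref{lem:Hopf} directly without any explicit $\pi_1$ computation.
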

\begin{proof}
	Note that $X_{n,j}$ is arranged along one of the square segments of $\gamma_{n-1,k}$, call this segment $\gamma_{n-1,k}^i$. By Corollary \ref{lem:loop}, we have that $T$ contains the loop of components of $X_n$ arranged along $\gamma_{n-1,k}^i$. Denote this loop by $A$. It remains to show that $T$ contains the complementary loop, call it $B$
	
	Suppose for the sake of contradiction that $T$ does not contain $B$. By Corollary \ref{lem:loop}, this means that $T$ does not contain any component of $B$. By Lemma \ref{lem:somenotall}, we then have that $T \subset \interior X_{n-1,k}$.
	
	By Lemma \ref{lem:circle}, since $T$ contains $A$, we have that $T$ also contains a topological circle $C$ that is homotopic to $\gamma_{n-1,k}^i$ via a homotopy whose image lies inside $X_{n-1, k}$. Since $T \subset \interior X_{n-1,k}$, the square $\gamma_{n-1,l}^{3-i}$, which is linked with $C$, is not contained in $T$. We also have, since $T$ contains $A$, that $T$ contains one of the squares involved in the four-way linking at the center of $X_{n-1,k}\cap X_n$, call it $\gamma_{n,r}^s$. Since $T$ does not contain $B$, there must also be a different square from the four-way linking that is not contained in $T$. Call this square $\gamma_{n,t}^u$, and note that it is linked with $\gamma_{n,r}^s$. 
	We summarize for clarity:
	\begin{itemize}
		\item $T$ contains the topological circle $C$.
		\item $T$ does not contain the square $\gamma_{n-1,l}^{3-i}$, which is linked with $C$.
		\item $T$ contains the square $\gamma_{n,r}^s$.
		\item $T$ does not contain the square $\gamma_{n,t}^u$, which is linked with $\gamma_{n,r}^s$.
	\end{itemize}

	By a similar argument as in the proof of Lemma \ref{lem:Hopf}, this requires that $\gamma_{n-1,l}^{3-i}$ and $\gamma_{n,t}^u$ be homotopic in $S^3 \setminus T$. But by construction of the standard defining sequence, since these two squares arise from different levels of the sequence, they are not homotopic in $S^3 \setminus (X_n \cap T)$, of which $S^3 \setminus T$ is a subset. This is a contradiction.
	
	Hence $T$ contains $B$, and so $T$ contains $A\cup B = X_{n-1,k}\cap X_n$.
\end{proof}

\begin{lemma}\label{lem:containsXn}
	If $T$ contains $X_{n,j}$, a component of $X_n$, then $T$ contains all of $X_n$.
\end{lemma}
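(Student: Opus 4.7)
The plan is to proceed by induction on $n$. The base case $n=1$ follows directly from Lemma \ref{lem:continuity}: since $X_0$ is a single component, that lemma immediately yields $T \supset X_0 \cap X_1 = X_1$. Now assume the claim holds at level $n-1$, and suppose $T \supset X_{n,j}$ for some $j$.

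By Lemma \ref{lem:continuity}, $T$ contains $X_{n-1,k} \cap X_n$, where $X_{n-1,k}$ is the unique component of $X_{n-1}$ containing $X_{n,j}$. The heart of the argument is to upgrade this to the stronger containment $X_{n-1,k} \subset T$; once established, the inductive hypothesis applied at level $n-1$ (with $X_{n-1,k}$ playing the role of the component contained in $T$) gives $T \supset X_{n-1} \supset X_n$, completing the induction.

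For the upgrade I would exploit the standing assumption $\partial T \cap \partial X_k = \emptyset$ for all $k$. Since the connected closed surface $\partial X_{n-1,k}$ avoids $\partial T$, it lies entirely in $\interior T$ or entirely in $\R^3 \setminus T$. If $\partial X_{n-1,k} \subset \interior T$, then the unbounded connected set $\R^3 \setminus T$ is disjoint from $\partial X_{n-1,k}$ and must therefore lie in $\R^3 \setminus X_{n-1,k}$, which gives exactly $X_{n-1,k} \subset T$. If instead $\partial X_{n-1,k} \subset \R^3 \setminus T$, then the connected set $T$, which meets $\interior X_{n-1,k}$ (since it contains $X_{n,j}$), must lie in $\interior X_{n-1,k}$.

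The main obstacle is ruling out this second configuration, which I would do via a homological argument. Since $T$ contains all components of $X_n$ lying in $X_{n-1,k}$, Lemma \ref{lem:circle} applies to each of the two loops along $\gamma_{n-1,k}^1$ and $\gamma_{n-1,k}^2$, producing topological circles $C^1, C^2 \subset T$ that are freely homotopic inside $X_{n-1,k}$ to the respective core squares. Now $X_{n-1,k}$ is a genus-$2$ handlebody, so $H_1(X_{n-1,k}) \cong \Z^2$ with basis $\{[\gamma_{n-1,k}^1], [\gamma_{n-1,k}^2]\}$, while $H_1(T) \cong \Z$. Letting $g$ denote the image of a generator of $H_1(T)$ under the inclusion-induced map $\iota_*: H_1(T) \to H_1(X_{n-1,k})$, and writing $c_i = [C^i] \in H_1(T)$, free homotopy yields $c_i g = [\gamma_{n-1,k}^i]$ for $i = 1, 2$. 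Hence both standard basis vectors of $\Z^2$ lie in the cyclic subgroup $\langle g \rangle$, which is impossible since $\langle g \rangle$ has rank at most $1$. This rules out the second case and forces $X_{n-1,k} \subset T$, closing the induction.
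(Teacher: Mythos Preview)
Your proof is correct, and it takes a genuinely different route from the paper's.

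The paper argues iteratively rather than inductively. Having obtained $X_{n-1,k}\cap X_n\subset T$ from Lemma~\ref{lem:continuity}, it builds a figure-eight surrogate curve $C_{n-1,k}\subset T$ homotopic (inside $X_{n-1,k}$) to the core $\gamma_{n-1,k}$, then reuses the Hopf/linking machinery (Lemmas~\ref{lem:Hopf}, \ref{lem:ComponentHopf}, Corollary~\ref{lem:loop}, and the four-way-linking step of Lemma~\ref{lem:continuity}) at level $n-1$ to spread to the sibling components of $X_{n-1,k}$ inside their common parent $X_{n-2,l}$. It then rebuilds a surrogate $C_{n-2,l}$ and climbs again, repeating until the top. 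A ``nested missed component'' bookkeeping at the end shows that all of $X_n$ is covered.

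Your argument replaces the entire climb by a single homological step: once you know $T$ either swallows $X_{n-1,k}$ or sits inside it, the inclusion $T\hookrightarrow X_{n-1,k}$ in the latter case would force the rank-$1$ image of $H_1(T)$ to contain the two independent generators $[\gamma_{n-1,k}^1],[\gamma_{n-1,k}^2]$ of $H_1(X_{n-1,k})\cong\Z^2$, which is impossible. This yields the stronger containment $X_{n-1,k}\subset T$ (not merely $X_{n-1,k}\cap X_n\subset T$), so the inductive hypothesis applies cleanly. The payoff is a shorter and more conceptual proof that avoids reconstructing the surrogate curves and re-running the linking lemmas at every level. The paper's approach, by contrast, stays entirely within the elementary linking framework already set up and never invokes homology, which keeps the argument self-contained in the knot-theoretic language of Section~\ref{sec:genus}.
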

\begin{proof}
	By Lemma \ref{lem:continuity}, since $T$ contains $X_{n,j}$, we have that $T$ contains $X_{n-1,k}\cap X_n$, where $X_{n-1,k}$ is the component of $X_{n-1}$ containing $X_{n,j}$. By Lemma \ref{lem:circle}, $T$ then contains topological circles $C_{n-1,k}^1$ and $C_{n-1,k}^2$, which are homotopic to the squares $\gamma_{n-1,k}^1$ and $\gamma_{n-1,k}^2$, respectively, inside $X_{n-1,k}$. 
	 By Corollary \ref{lem:distance}, $T$ also contains a path $p$ connecting $C_{n-1,k}^1$ and $C_{n-1,k}^2$ which remains arbitrarily close to a canonical filling disk at the four-way linking in $X_{n-1,k}\cap X_n$. To sum up,
		\[C_{n-1,k} := C_{n-1,k}^1 \cup p \cup C_{n-1,k}^2 \subset T, \]
	where, by construction, $C_{n-1,k}$ is homotopic to the core curve $\gamma_{n-1,k}$ inside $X_{n-1,k}$, and therefore has the same linking behavior as $\gamma_{n-1,k}$.
	
	Note that $C_{n-1,k}$ is a figure-eight curve consisting of unknotted loops, and that $C_{n-1,k}$ is sequentially linked with the core curves $\gamma_{n-1,k-1}$ and $\gamma_{n-1,k+1}$. By Lemma \ref{lem:Hopf}, $T$ contains at least one of these core curves. In fact, the argument of Lemma \ref{lem:ComponentHopf} shows that $T$ actually contains at least one linking neighbor of $X_{n-1,k}$. Following the same line of reasoning as the previous several lemmas, we can conclude in a similar manner to the proof of Lemma \ref{lem:continuity} that $T$ contains every other component from $X_{n-1}$ contained in the same component of $X_{n-2}$ as $X_{n-1,k}$. Call this component $X_{n-2,l}$.
	
	We may now construct a figure-eight curve $C_{n-2,l}$ to which $\gamma_{n-2,l}$ is homotopic inside $X_{n-2,l}$, and such that $C_{n-2,l}$ has the same linking behavior as $\gamma_{n-2,l}$. By the same reasoning as in the previous paragraph, we conclude that $T$ contains every other component from $X_{n-2}$ contained in the same component of $X_{n-3}$.
	
	Repeating this process finitely many times, we obtain the following:
		\begin{itemize}
		\item $T$ contains all but one component of $X_1$.
		\item Out of the components of $X_2$ contained in the missed component of $X_1$, $T$ contains all but one.
		\item Out of the components of $X_3$ contained in the missed component of $X_2$, $T$ contains all but one.
		\item This carries down to the missed component $X_{n-1,k}$ of $X_{n-1}$, but $T$ contains all of the components of $X_{n} \cap X_{n-1,k}$. 
		\end{itemize}
	
	Since $X_n \subset X_i$ for all $i \leq n$, we conclude that $T$ contains all of $X_n$.
\end{proof}

\begin{proof}[Proof of Theorem \ref{thm:cantorset}]
	Let $X$ be the Cantor set constructed in Section \ref{sec:cansetconstruction}, and suppose for the sake of contradiction that there exists $x\in X$ such that $g_x(X) =1$. Then there exists a defining sequence $(M_i)$ of $X$ having components that are solid tori, a sequence of which accumulates to $x$. Let $T$ be such a small toroidal neighborhood of $x$.
	
	At some level $n$ of the standard defining sequence, we must then have that $T$ contains a component of $X_n$. By Lemma \ref{lem:containsXn}, $T$ contains all of $X_n$. However, we have $\text{diam}(T) \geq \text{diam}(\gamma_0)$, where $\gamma_0$ is the core curve of the double torus $X_0$. This means that $(M_i)$ cannot contain arbitrarily small solid tori accumulating to $x$, which is a contradiction.
	
	We conclude that $g_{x}(X) \geq 2$ for all $x \in X$, and so in particular $g(X) \geq 2$. Since the standard defining sequence consists entirely of double tori, $g(X) \leq 2$, and hence $g(X) = 2$.
\end{proof}

\end{document}